\allowdisplaybreaks \numberwithin{equation}{section}
\numberwithin{equation}{section}
\newtheorem{theorem}{Theorem}[section]
\newtheorem{lemma}[theorem]{Lemma}
\theoremstyle{definition}
\theoremstyle{remark}
\newtheorem{remark}[theorem]{Remark}
\newtheorem{remarks}[theorem]{Remarks}
\begin{document}

\title
{Traveling vortex pairs for 2D incompressible Euler equations}

 \author{Daomin Cao, Shanfa Lai,  Weicheng Zhan}

\address{Institute of Applied Mathematics, Chinese Academy of Sciences, Beijing 100190, and University of Chinese Academy of Sciences, Beijing 100049,  P.R. China}
\email{dmcao@amt.ac.cn}
\address{Institute of Applied Mathematics, Chinese Academy of Sciences, Beijing 100190, and University of Chinese Academy of Sciences, Beijing 100049,  P.R. China}
\email{laishanfa19@mails.ucas.edu.cn}
\address{Institute of Applied Mathematics, Chinese Academy of Sciences, Beijing 100190, and University of Chinese Academy of Sciences, Beijing 100049,  P.R. China}
\email{zhanweicheng16@mails.ucas.ac.cn}


\begin{abstract}
 In this paper, we study desingularization of vortices for the two-dimensional incompressible Euler equations in the full plane. We construct a family of steady vortex pairs for the Euler equations with a general vorticity function, which constitutes a desingularization of a pair of point vortices with equal magnitude and opposite signs. The results are obtained by using an improved vorticity method.
\end{abstract}

\maketitle

\section{Introduction and Main results}
The incompressible planar flow without external force is governed by the following Euler equations
\begin{numcases}
{}
\label{0-1} \partial_t \mathbf{v}+(\mathbf{v}\cdot\nabla)\mathbf{v}=-\nabla P &\\
\label{0-2}  \nabla\cdot\mathbf{v}=0,  &
\end{numcases}
where $\mathbf{v}=(v_1,v_2)$ is the velocity field, $P$ is the scalar pressure.

Let $\omega=\nabla\times \mathbf{v}=\partial_1v_2-\partial_2v_1$ be the corresponding vorticity of the flow. Then the vorticity $\omega$ obeys the following  transport equation
\begin{equation}\label{0-3}
  \partial_t\omega+\mathbf{v}\cdot \nabla\omega=0.
\end{equation}
On the other hand, by virtue of equation \eqref{0-2}, there exists a Stokes stream function $\Psi(x,t)$ such that
\begin{equation*}
  \mathbf{v}=\nabla^\perp \Psi\equiv (\partial_{x_2} \Psi, -\partial_{x_1}\Psi).
\end{equation*}
Note that by definition we have $-\Delta \Psi =\omega$. Thus the velocity field $\mathbf{v}$ can be recovered from the vorticity function $\omega$ through the Biot-Savart law
\begin{equation*}
  \mathbf{v}=\nabla^\perp (-\Delta)^{-1}\omega=-\frac{1}{2\pi} \frac{x^\perp}{|x|^2}\ast \omega.
\end{equation*}
Based on this transformation, we can rewrite the Euler equations as the vorticity-stream formulation (see \cite{MB})
\begin{numcases}
{}
\label{1-1} \partial_t\omega+\mathbf{v}\cdot \nabla\omega=0, &\\
\label{1-2}  \mathbf{v}=\nabla^\perp \Psi,\ \Psi=(-\Delta)^{-1}\omega.  &
\end{numcases}

One of the most spectacular phenomena in nature is a fluid with concentrated vortices, exemplified by atmospheric cyclones, whirlwinds and tornados, oceanic vortices, and whirlpools on a water surface. There flows are usually characterized by the localization of vorticity a finite number of small regions, beyond which the vorticity is either absent or rapidly falls down to zero. Mathematically, one can consider singular solutions of the Euler equations given by
\begin{equation}\label{po1}
  \omega=\sum_{i=1}^{N}\kappa_i \bm{\delta}\left(x-\Im_i(t)\right),
\end{equation}
corresponding to
\begin{equation*}
  \mathbf{v}(x)=-\sum_{i=1}^{N}\frac{\kappa_i}{2\pi}\frac{(x-\Im_i(t))^\perp}{|x-\Im_i(t)|^2},
\end{equation*}
and the position of the vortices $\Im_i:\mathbb{R}\to \mathbb{R}^2$ satisfy
\begin{equation}\label{po3}
  \frac{d}{dt}{\Im}_i(t)=-\sum_{j=1, j\neq i}^{N}\frac{\kappa_j}{2\pi}\frac{(\Im_i(t)-\Im_j(t))^\perp}{|\Im_i(t)-\Im_j(t)|^2}.
\end{equation}
Here $\bm{\delta}(x)$ denotes the standard Dirac mass at the origin, and the constants $\kappa_i$ are called the intensities of the vortices $\Im_i(t)$. The Hamiltonian system \eqref{po3} is now generally referred to as the point vortex model or the Kirchhoff-Routh model (see \cite{Mar}).

We are concerned in this paper with the solutions with sharply concentrated vorticities around the points $\Im_1(t),...,\Im_N(t)$ which are idealized as regular solutions that approximate the point vortex model. Finding such regular solutions is classical vortex desingularization problem, see \cite{Cao1, Cao2, Da1, Mar, SV} and the references therein. Note that system \eqref{po3} admits a simple traveling solution which is of the form
\begin{equation}\label{po4}
  {\Im}_i(t)=b_i-Wt \mathbf{e}_2,\ \  i=1,..., N,
\end{equation}
where $b_1,..., b_N$ are points in $\mathbb{R}^2$, the constant $W>0$ is the speed, and $\mathbf{e}_2=(0,1)$. In this paper, we focus on traveling solutions of the form \eqref{po4}. For simplicity, we consider the traveling vortex pair, namely the solution with $N=2$, and
\begin{equation}\label{po5}
     b_1  =-b_2=r\mathbf{e}_1,\ \ \mathbf{e}_1=(1,0),\ \ \kappa_1=-\kappa_2=\kappa,\ \ W =\frac{\kappa}{4\pi r},
\end{equation}
where $r>0$ and $\kappa>0$.

The study of planar vortex pairs is one of the main objects of mathematical fluid mechanics, which has been carried out for more than a century. They are one instance of a large collection of coherent structures found in two dimensional vortex dynamics. The literature of vortex pairs can be traced back to the work of Pocklington \cite{Poc} in 1895. Vortex pairs are theoretical models of coherent vortex structures in large-scale geophysical flows, see \cite{Flo, Hei}.

To study vortex desingularization problem of the traveling vortex pair, we can look for traveling solutions to \eqref{1-1}-\eqref{1-2} by requiring that
\begin{equation*}
  \omega(x_1,x_2, t)=\zeta(x_1, x_2+Wt),
\end{equation*}
for some profile function $\zeta$ defined on $\mathbb{R}^2$. In this case, $\Psi(x_1,x_2, t)=\phi(x_1, x_2+Wt)$ with $\phi=(-\Delta)^{-1}\zeta$. Therefore the vorticity-stream formulation \eqref{1-1}-\eqref{1-2} is reduced to a stationary problem
\begin{equation}\label{po7}
  \nabla^\perp (\phi-Wx_1)\cdot \nabla \zeta=0,\ \ \phi=(-\Delta)^{-1}\zeta.
\end{equation}
The condition that $\omega$ approximates \eqref{po1} now becomes
\begin{equation}\label{po8}
  \zeta(x)\approx\sum_{i=1}^{2}\kappa_i \bm{\delta}\left(x-b_i\right).
\end{equation}
We consider flows are symmetric about the $x_2$-axis, and restrict attention henceforth to the half-plane $\Pi=\{x\in \mathbb{R}^2\mid x_1>0\}$. Specifically, we assume
\begin{equation*}
  \zeta(x_1, x_2)=-\zeta(-x_1, x_2),
\end{equation*}
so that $\phi(x_1, x_2)=-\phi(-x_1, x_2)$. A natural way of obtaining solutions to the stationary problem \eqref{po7} is to impose that $\zeta(x)$ and $\phi(x)-Wx_1$ are locally functional dependent. On the other hand, to get \eqref{po8}, we can introduce a parameter and study the asymptotic behavior of the solutions with respect to this parameter. More precisely, we consider the following equation
\begin{equation}\label{fr}
  -\Delta \phi=\left\{
   \begin{array}{ll}
     \lambda f(\phi-Wx_1-\mu) &\text{in}\ \  A, \\
    \ \ \ \ \ \ \  0 &\text{in}\ \  \Pi-\overline{A},
     \end{array}
     \right.
\end{equation}
where $\lambda>0$ is a vortex strength parameter, $f:\mathbb{R}\to \mathbb{R}$ is a vorticity function, $\mu\ge0$ is flux constant, and $A=\{x\in \Pi\mid \zeta(x)\neq 0\}$ is half the vortex pair (a priori unknown). Here $\nabla \phi$ should be continuous across $\partial A$, the boundary of $A$. Since the flow is symmetric about the $x_2$-axis, the $x_2$-axis must be a streamline. We set $\phi=0$ on the $x_2$-axis. Moreover, we impose uniform flow at infinity by means of $\nabla \phi \to 0$ as $|x|\to \infty$. The unknown boundary $\partial A$ is also a streamline and we set $\phi=0$ on $\partial A$. We can remove this consideration by requiring that
\begin{itemize}
\item [($\text{H1}$)]\ \ $f(s)=0\ \ \text{for}\ \ s\le 0;\ \ f(s)>0\ \ \text{for}\ \ s>0.$
\end{itemize}
Then we may restate \eqref{fr} as
\begin{equation}\label{fr2}
  -\Delta \phi= \lambda f(\phi-Wx_1-\mu)\ \ \text{in}\ \Pi,
\end{equation}
where $A=\{x\in \Pi\mid \phi(x)-Wx_1-\mu>0 \}$, since the maximum principle implies that
\begin{equation*}
  \phi(x)-Wx_1-\mu>0 \ \text{in}\ A\  \ \ \text{and}\ \ \ \phi(x)-Wx_1-\mu<0\ \text{in}\ \Pi-\overline{A}.
\end{equation*}

In 1906, Lamb \cite{Lamb} noted an explicit solution to \eqref{fr2} with $f(s)=s_+$ and $\mu=0$, where $s_+:=\max\{s,0\}$. This explicit solution is now generally referred to as the Lamb dipole or Chaplygin-Lamb dipole, see \cite{Mel}. Its vorticity $\zeta$ is positive inside a semicircular region, that outside this region the flow is irrotational. In conjunction with its reflection in the $x_2$-axis, this flow constitutes a circular vortex. The uniqueness for the circular vortex pair was established by Burton \cite{Bu1, Bu2}. Recently, Abe and Choi \cite{Abe} considered orbital stability of the Lamb dipole.

Exact solutions of \eqref{fr} are known only in special cases. Beside those exact solutions, the existence and abundance of steady vortex pairs has been rigorously established. Norbury \cite{Nor} constructed a wide class of variational solutions based on the variational approach proposed in \cite{Fra}. In \cite{Nor}, the vorticity function $f$ is prescribed, and the vortex strength parameter $\lambda$ arises as a Lagrange multiplier and hence is left undetermined. Turkington \cite{Tur83} studied the case when $f$ is the Heaviside function and constructed a family of desingularization solutions for \eqref{po8}. In \cite{Bad, Bu0}, Burton and Badiani proved the existence of steady vortex pairs with a prescribed distribution of the vorticity. In this setting, the vorticity function $f$ is a nondecreasing function but unknown a priori. Ambrosetti and Yang \cite{Amb, Yang} studied the existence of solutions by using the mountain pass lemma. They also studied the asymptotic behavior of the solutions when $\lambda \to +\infty$. However, their limiting objects are degenerate vortex pairs with vanishing vorticity and hence do not answer the question about vortex desingularization (cf.~\cite{SV}). Recently, Smets and Van Schaftingen \cite{SV} obtained the desingularization of vortex pairs with $f(s)=s_+^p$ for $p>1$. For numerical studies, see, e.g., \cite{Del, Kiz}.

The purpose of this paper is to study vortex desingularization problem of the traveling vortex pair with a general vorticity function $f$. As mentioned above, for some special nonlinearities $f$, there are already some desingularization results. Our goal here is to generalize these results to a more general nonlinearity and hence enrich the known solutions of steady vortex pairs.

Our first main result is as follows.
\begin{theorem}\label{thm1}
  Consider the traveling vortex pair given by \eqref{po5}. Suppose $f$ is a bounded nondecreasing function satisfying (H1). Then for $\lambda>0$ large there is a travelling solution $(\omega^\lambda,\Psi^\lambda)$ to \eqref{1-1}-\eqref{1-2} given by
  \begin{equation*}
    \omega^\lambda(x_1,x_2, t)=\zeta^\lambda(x_1, x_2+Wt)=\lambda f\left(\Psi^\lambda(x_1, x_2+Wt)-Wx_1-\mu^\lambda\right),
  \end{equation*}
  for all $(x_1,x_2)\in \Pi$, and which satisfies
  \begin{equation*}
    \zeta^\lambda(x)\rightharpoonup \kappa \bm{\delta}\left(x-b_1\right)-\kappa \bm{\delta}\left(x-b_2\right)\ \text{as}\ \lambda \to +\infty,\ \ \text{supp} (\zeta^\lambda) \subset  \bigcup^2_{i=1}B_{L\lambda^{-\frac{1}{2}}}(b_i),
 \end{equation*}
 where the convergence is in the sense of measures and $L>0$ is a constant independent of $\lambda$. Moreover, one has
  \begin{equation*}
    \zeta(x_1,x_2)=-\zeta(-x_1,x_2)=\zeta(x_1,-x_2),
  \end{equation*}
  for all $(x_1,x_2)\in \mathbb{R}^2$, and the corresponding flux constant $\mu^\lambda$ satisfies for $\lambda$ large
 \begin{equation*}
   \mu^\lambda=\frac{\kappa}{4\pi}\log \lambda+O(1).
 \end{equation*}
\end{theorem}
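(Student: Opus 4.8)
The plan is to realize the profile $\zeta^\lambda$ in the half-plane $\Pi$ as a maximizer of the kinetic-energy-type functional
\[
  E_\lambda(\zeta)=\frac12\int_\Pi\int_\Pi G_\Pi(x,y)\zeta(x)\zeta(y)\,dx\,dy-W\int_\Pi x_1\zeta(x)\,dx,
\]
where $G_\Pi$ is the Green's function of $-\Delta$ on $\Pi$ with Dirichlet boundary condition, over the constraint set
\[
  \mathcal{A}_\lambda=\Bigl\{\zeta\in L^\infty(\Pi)\ \Bigm|\ 0\le\zeta\le\lambda\|f\|_{L^\infty},\ \int_\Pi\zeta\,dx=\kappa,\ \operatorname{supp}\zeta\subset B_{s_0}(b_1)\Bigr\}
\]
for a fixed small $s_0$ (later we must show the constraint $\operatorname{supp}\zeta\subset B_{s_0}(b_1)$ is inactive). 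One then extends $\zeta^\lambda$ oddly across the $x_2$-axis to obtain a function on $\mathbb{R}^2$; the Biot–Savart field of this odd extension restricted to $\Pi$ equals $\nabla^\perp\phi^\lambda$ with $\phi^\lambda=\int_\Pi G_\Pi(\cdot,y)\zeta^\lambda(y)\,dy$, which automatically vanishes on the $x_2$-axis. Even symmetry in $x_2$ is obtained for free because the functional, the constraint set, and $G_\Pi$ are all invariant under $x_2\mapsto-x_2$ (passing to the even part does not decrease $E_\lambda$, or one simply notes uniqueness of the maximizer on a suitable symmetric class).

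The core of the argument is the asymptotic analysis as $\lambda\to+\infty$. First I would establish a lower bound $E_\lambda(\zeta^\lambda)\ge E_\lambda(\zeta^{\mathrm{test}})$ using an explicit test function, e.g. $\zeta^{\mathrm{test}}=\lambda\|f\|_\infty\mathbf{1}_{B_\rho(b_1)}$ with $\rho=(\kappa/(\pi\lambda\|f\|_\infty))^{1/2}$ so that the mass equals $\kappa$; the leading term of its energy is $\tfrac{\kappa^2}{4\pi}\log\lambda$ coming from the singular self-interaction part of $G_\Pi$. Then, using the standard decomposition $G_\Pi(x,y)=-\tfrac1{2\pi}\log|x-y|-h(x,y)$ with $h$ the (smooth, up to the boundary away from it) regular part and a rearrangement/bathtub estimate to bound $\int\int -\tfrac1{2\pi}\log|x-y|\,\zeta^\lambda(x)\zeta^\lambda(y)$ from above in terms of $\lambda$ and $\operatorname{diam}(\operatorname{supp}\zeta^\lambda)$, one deduces that $\operatorname{supp}\zeta^\lambda$ must shrink: $\operatorname{diam}(\operatorname{supp}\zeta^\lambda)=O(\lambda^{-1/2})$, and after a further localization step the support concentrates near a single point $x^\lambda$ that, by the regular-part term $-\tfrac12\int\int h\,\zeta\zeta$ and the impulse term $-W\int x_1\zeta$, must converge to the critical point of the renormalized Kirchhoff–Routh-type function $x\mapsto -\tfrac{\kappa^2}{2}h(x,x)-W\kappa x_1$ on $\Pi$; the choice $W=\kappa/(4\pi r)$ in \eqref{po5} makes $b_1=r\mathbf{e}_1$ precisely this critical point (since $h(x,x)=-\tfrac1{2\pi}\log(2x_1)$ on the half-plane, $\partial_{x_1}(-\tfrac{\kappa^2}{2}h(x,x))=\tfrac{\kappa^2}{4\pi x_1}$, balancing $W\kappa$ at $x_1=r$). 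This yields the claimed weak-$*$ convergence $\zeta^\lambda\rightharpoonup\kappa\bm\delta(\cdot-b_1)$ in $\Pi$ and hence, via the odd reflection, the statement on $\mathbb{R}^2$.

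Next I would recover the profile equation and identify $\mu^\lambda$. By the Lagrange multiplier rule for the maximization problem, there is $\mu^\lambda\in\mathbb{R}$ with $\phi^\lambda(x)-Wx_1-\mu^\lambda>0$ a.e. on $\{0<\zeta^\lambda\}$ and $\le0$ a.e. off it, and on the set where $0<\zeta^\lambda<\lambda\|f\|_\infty$ no constraint binds; since $f$ is nondecreasing and satisfies (H1), a now-classical bootstrap (as in Turkington or Smets–Van Schaftingen) forces the pointwise relation $\zeta^\lambda=\lambda f(\phi^\lambda-Wx_1-\mu^\lambda)$ a.e., provided $\zeta^\lambda$ does not saturate the upper bound $\lambda\|f\|_\infty$ on a set of positive measure — and it does not, because $f$ being bounded means $\lambda f(\cdot)<\lambda\|f\|_\infty$ pointwise whenever $\|f\|_\infty$ is not attained, with the attained case handled by an approximation of $f$. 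Once \eqref{fr2} holds, \eqref{po7} follows, giving the desired traveling solution. To pin down $\mu^\lambda=\tfrac{\kappa}{4\pi}\log\lambda+O(1)$: integrate the profile relation and use $\operatorname{supp}\zeta^\lambda\subset B_{L\lambda^{-1/2}}(b_1)$; on this small ball $\phi^\lambda(x)=\int G_\Pi(x,y)\zeta^\lambda(y)\,dy$, whose dominant behavior near $b_1$ is $-\tfrac{\kappa}{2\pi}\log|x-b_1|+O(1)=\tfrac{\kappa}{4\pi}\log\lambda+O(1)$ on the support (after subtracting the $Wx_1$ and regular parts, which are $O(1)$), so the condition $\phi^\lambda-Wx_1-\mu^\lambda\ge0$ on a set of positive measure in $B_{L\lambda^{-1/2}}(b_1)$ together with the fact that the same quantity is $\le0$ just outside forces $\mu^\lambda$ to match this value up to $O(1)$.

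I expect the main obstacle to be the concentration/localization step: upgrading the crude diameter bound $\operatorname{diam}(\operatorname{supp}\zeta^\lambda)=O(\lambda^{-1/2})$ and the center convergence $x^\lambda\to b_1$ into the clean statements that the $B_{s_0}(b_1)$ constraint is inactive (so that $\zeta^\lambda$ genuinely solves the free problem \eqref{fr2}) and that the support lies in $B_{L\lambda^{-1/2}}(b_1)$ with $L$ independent of $\lambda$. This requires careful two-sided energy estimates: the upper bound on $E_\lambda(\zeta^\lambda)$ via the logarithmic rearrangement inequality must be sharp enough to both exclude mass spreading and to locate the centroid, and one must rule out the support splitting into several tiny blobs (handled by a "no dichotomy" argument showing a single blob is energetically favored, using superadditivity of $-\log|x-y|$ self-interaction). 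The interplay between the a priori constraint $\operatorname{supp}\zeta\subset B_{s_0}(b_1)$ and the a posteriori conclusion that the true support is much smaller — and in particular away from $\partial B_{s_0}(b_1)$ — is the delicate technical heart, and it is precisely where the boundedness and monotonicity of $f$ enter to keep the analysis uniform in $\lambda$.
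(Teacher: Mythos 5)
There is a genuine gap at the heart of your proposal: the functional you maximize,
\begin{equation*}
  E_\lambda(\zeta)=\frac12\int_\Pi\int_\Pi G(x,y)\zeta(x)\zeta(y)\,dx\,dy-W\int_\Pi x_1\zeta\,dx,
\end{equation*}
does not see the vorticity function $f$ at all except through the upper bound $\lambda\|f\|_{L^\infty}$ in the constraint set, so its maximizers cannot satisfy the profile relation $\zeta^\lambda=\lambda f(\phi^\lambda-Wx_1-\mu^\lambda)$ for a general nondecreasing $f$. Concretely, $E_\lambda$ is convex in $\zeta$ (the quadratic term equals $\int|\nabla\mathcal{G}\zeta|^2$ and the impulse term is linear), so at any maximizer $\zeta^\lambda$ the first variation gives that $\zeta^\lambda$ maximizes the linear functional $\zeta'\mapsto\int(\mathcal{G}\zeta^\lambda-Wx_1)\zeta'$ over the constraint set; this is a bathtub problem whose solution is bang--bang, $\zeta^\lambda=\lambda\|f\|_\infty\chi_{\{\mathcal{G}\zeta^\lambda-Wx_1>\mu^\lambda\}}$ (the level set where equality holds has measure zero). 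That is Turkington's vortex patch, i.e.\ the Heaviside profile, and no Lagrange-multiplier or bootstrap argument can upgrade it to $\lambda f(\cdot)$ for other $f$; your remark that the relation follows ``provided $\zeta^\lambda$ does not saturate the upper bound'' is exactly backwards --- it always saturates it. The paper's fix is to add the penalization term $-\varepsilon^{-2}\int_D J(\varepsilon^2\zeta)\,dx$, where $J$ is the convex conjugate of $F=\int_0^\cdot f$, so that the first-order condition reads $\mathcal{G}\zeta^\varepsilon-Wx_1-\mu^\varepsilon\in\partial J(\varepsilon^2\zeta^\varepsilon)$ and inverts (since $\partial J$ and $f$ are inverse graphs) to $\varepsilon^2\zeta^\varepsilon=f(\psi^\varepsilon)$ with $\lambda=\varepsilon^{-2}$. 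This dual variational structure is not a technical decoration; it is the mechanism by which $f$ enters the problem, and without it the theorem for general $f$ is out of reach along your route.

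The remainder of your outline (test-function lower bound of order $\frac{\kappa^2}{4\pi}\log\lambda$, rearrangement upper bound, localization of the support to $O(\lambda^{-1/2})$, identification of $b_1$ via the Kirchhoff--Routh function with $W=\kappa/(4\pi r)$, and the estimate $\mu^\lambda=\frac{\kappa}{4\pi}\log\lambda+O(1)$) does track the paper's asymptotic analysis, though with the penalization present one must additionally control $\varepsilon^{-2}\int J(\varepsilon^2\zeta^\varepsilon)$, which the paper does via the duality identity $J(\varepsilon^2\zeta^\varepsilon)+F(\psi^\varepsilon)=\varepsilon^2\zeta^\varepsilon\psi^\varepsilon$ and a bound on the core energy $\int\zeta^\varepsilon\psi^\varepsilon$. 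Two smaller deviations: the paper constrains the support to a fixed rectangle $D$ around $b_1$ (not a small ball), imposes only $\int\zeta\le\kappa$ and proves equality a posteriori from the positivity of $\mu^\varepsilon$, and obtains the $x_2$-symmetry by Steiner symmetrization, which is also what guarantees that the level sets of $\mathcal{G}\zeta^\varepsilon-Wx_1$ are null --- a fact you would need anyway to run the bathtub/multiplier argument when $f$ is discontinuous.
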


\begin{remarks}
 We give some remarks about the above theorem.
  \begin{itemize}
    \item [1)] Notice that $f$ is allowed to be discontinuous. Nevertheless, the monotonicity of $f$ is enough to ensure the above solutions do give rise to traveling solutions of the Euler equations \eqref{0-1}-\eqref{0-2}. We refer the reader to Section 5 in \cite{Bu0} for a demonstration. Indeed, these solutions are classical solutions when $f$ is smooth enough. This can be achieved by the classical elliptic estimates.
    \item [2)] When $f$ is the Heaviside function, we reobtain Turkington's result in \cite{Tur83}. Our first result can be viewed as a generalization of Turkington's work.
  \end{itemize}
\end{remarks}

Our second result is concerned with the case when $f$ is unbounded. For technical reasons, we make some assumptions on $f$.
\begin{itemize}
\item[(H2)] There exist $\vartheta_0\in(0,1)$ and $\vartheta_1>0$ such that
\[F(s):=\int_0^sf(t)dt\leq \vartheta_0f(s)s+\vartheta_1f(s),\,\,\,\forall\,s\geq0.\]
\item[(H3)] There holds
\[\liminf_{s\to+\infty} f(s)e^{-\vartheta_2s}=0,\ \ \ \text{where}\ \  \vartheta_2=\frac{4\pi\min\{2\vartheta_0, 2-2\vartheta_0\} }{\kappa}.\]
\end{itemize}
Assumption (H2) is a Ambrosetti-Rabinowitz-type condition, cf.~condition $(p5)$ in \cite{Am}. This assumption implies that $f$ is unbounded (see \cite{Ni}). Assumption (H3) requires that the function $f$ does not grow too fast. Many profile functions that frequently appear in nonlinear elliptic equations satisfy (H1)-(H3), for example, $f(s)=s_+^p$ with $p\in (0,+\infty)$. Under these assumptions, we have
\begin{theorem}\label{thm2}
  Suppose that $f$ is a nondecreasing function satisfying (H1)-(H3), then the conclusion of Theorem \ref{thm1} still holds.
\end{theorem}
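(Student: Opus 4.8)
The plan is to reduce the proof of Theorem \ref{thm2} to that of Theorem \ref{thm1} by an approximation-and-uniform-estimates scheme, exploiting (H1)--(H3) to control the unbounded nonlinearity. The key structural fact is that the construction behind Theorem \ref{thm1} (presumably an improved vorticity method: maximizing the kinetic energy, or a suitable Arnold-type functional, over a constrained admissible class of vorticities supported near $b_1$ with prescribed total mass $\kappa$ and circulation constraints reflecting the symmetry $\zeta(x_1,x_2)=-\zeta(-x_1,x_2)=\zeta(x_1,-x_2)$) produces, for each large $\lambda$, a solution $(\zeta^\lambda,\phi^\lambda,\mu^\lambda)$ whose qualitative features (support localization of size $L\lambda^{-1/2}$, weak convergence to the Dirac pair, $\mu^\lambda=\frac{\kappa}{4\pi}\log\lambda+O(1)$) follow from $\lambda$-uniform energy and Lagrange-multiplier estimates. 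The strategy is therefore: (i) for $k\in\mathbb{N}$ set $f_k:=\min\{f,k\}$, which is bounded, nondecreasing, and still satisfies (H1); apply Theorem \ref{thm1} to get solutions $(\zeta^\lambda_k,\phi^\lambda_k,\mu^\lambda_k)$; (ii) establish estimates on these solutions that are uniform in $k$ (for fixed large $\lambda$), using (H2)--(H3); (iii) pass to the limit $k\to\infty$ and check that the limit solves \eqref{fr2} with the original $f$ and inherits all the asserted properties.

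First I would set up the variational framework on the half-plane $\Pi$ (or on a large bounded box exhausting $\Pi$, with the symmetry built in by reflecting across both axes), writing the energy functional $E(\zeta)=\frac12\int\int G(x,y)\zeta(x)\zeta(y)\,dx\,dy - W\int x_1\zeta\,dx$ where $G$ is the relevant symmetric Green's function, and maximizing over the admissible set $\{\,0\le\zeta\le\lambda,\ \int_{B_\delta(b_1)\cap\Pi}\zeta=\kappa\,\}$ à la Turkington, but now with $\zeta\le\lambda f_k(\,\cdot\,)$ replaced by the nonlinear profile constraint coming from the Euler--Lagrange equation. The crucial point is that (H2), the Ambrosetti--Rabinowitz-type bound $F(s)\le\vartheta_0 f(s)s+\vartheta_1 f(s)$, lets me bound $\int F(\phi^\lambda_k - Wx_1 - \mu^\lambda_k)$ in terms of $\int f(\cdots)(\phi^\lambda_k - Wx_1)$ plus lower order, hence ultimately in terms of the energy and the mass $\kappa$; combined with the sharp constant in the Moser--Trudinger / logarithmic HLS inequality, this yields an a priori bound on $\|\zeta^\lambda_k\|_{L^1}$ and on the energy that does \emph{not} degenerate as $k\to\infty$ — the exponent $\vartheta_2=\frac{4\pi\min\{2\vartheta_0,2-2\vartheta_0\}}{\kappa}$ in (H3) is precisely calibrated so that the Gaussian/exponential tail of the Green's kernel beats the growth of $f$. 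Then (H3), $\liminf_{s\to\infty}f(s)e^{-\vartheta_2 s}=0$, upgrades these to a genuine uniform $L^\infty$ bound on $\zeta^\lambda_k$ on its (uniformly small) support, via a bootstrap: the support shrinks, $\phi^\lambda_k$ is uniformly bounded in $L^\infty_{loc}$ by elliptic estimates, so $\phi^\lambda_k - Wx_1 - \mu^\lambda_k$ is bounded above by a constant, whence $f_k(\cdots)=f(\cdots)$ for $k$ large and $\zeta^\lambda_k$ stabilizes.

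Next I would extract the $k\to\infty$ limit. From the uniform bounds, $\zeta^\lambda_k\rightharpoonup\zeta^\lambda$ weakly-$*$ (along a subsequence), $\phi^\lambda_k\to\phi^\lambda$ in $C^1_{loc}$ by elliptic regularity, and $\mu^\lambda_k\to\mu^\lambda$ (the multiplier bounds are uniform by the same energy estimate); the nonlinear term passes to the limit because $f_k\nearrow f$ with $f$ continuous away from the (at most countable) jump set and the arguments avoid that set by the maximum-principle dichotomy $\phi^\lambda-Wx_1-\mu^\lambda\ne0$ a.e.\ on $\partial A$. One then checks $\zeta^\lambda=\lambda f(\phi^\lambda-Wx_1-\mu^\lambda)$, the support inclusion $\mathrm{supp}\,\zeta^\lambda\subset\bigcup_i B_{L\lambda^{-1/2}}(b_i)$ (the constant $L$ is $k$-uniform, hence survives), the symmetry relations, the weak convergence $\zeta^\lambda\rightharpoonup\kappa\bm\delta(x-b_1)-\kappa\bm\delta(x-b_2)$ as $\lambda\to\infty$ (this is now an entirely $\lambda$-asymptotic statement, proved exactly as in Theorem \ref{thm1} using the already-established uniform estimates), and finally the flux asymptotics $\mu^\lambda=\frac{\kappa}{4\pi}\log\lambda+O(1)$, which comes from testing the equation against a cutoff of $\phi^\lambda$ near $b_1$ and using $-\frac{1}{2\pi}\log|x-b_1|$ as the leading profile of $\phi^\lambda$.

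The main obstacle, as indicated, is step (ii): obtaining bounds on $(\zeta^\lambda_k,\phi^\lambda_k,\mu^\lambda_k)$ that are \emph{simultaneously} uniform in $k$ and sharp enough in $\lambda$. The delicate interplay is between (H2), which prevents the energy from running off (a Pohozaev/Ambrosetti--Rabinowitz-type a priori bound), and (H3), whose constant $\vartheta_2$ must match the sharp Moser--Trudinger threshold associated with the $\log$-kernel on the vortex core of mass $\kappa$; if the growth of $f$ exceeded this threshold the core could fail to concentrate. Making this quantitative — in particular showing the core radius is $O(\lambda^{-1/2})$ with a $k$-independent constant — is where the real work lies; everything else is a fairly standard limiting argument riding on Theorem \ref{thm1} and classical elliptic estimates.
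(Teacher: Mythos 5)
Your overall skeleton (truncate $f$, solve the truncated problem via the machinery of Theorem \ref{thm1}, then remove the truncation using (H2)--(H3)) matches the paper's strategy, and you correctly identify (H2) as an Ambrosetti--Rabinowitz-type control and (H3) as a growth restriction. But the step you yourself flag as ``where the real work lies'' is precisely the step you do not supply, and the mechanism you gesture at (a sharp Moser--Trudinger / log-HLS threshold, ``the Gaussian tail of the Green's kernel beats the growth of $f$'') is not how the estimate actually goes. The paper's argument is elementary and rests on a cancellation of the $\log(1/\varepsilon)$ terms: (a) since $0\le\zeta^{\varepsilon,\rho}\le f(\rho)/\varepsilon^2$ with mass $\kappa$, a bathtub/rearrangement bound gives the pointwise estimate $\mathcal{G}\zeta^{\varepsilon,\rho}\le\frac{\kappa}{2\pi}\log\frac{1}{\varepsilon}+\frac{\kappa}{4\pi}\log f(\rho)+C$; (b) inserting (H2) and the Legendre identity $J_\rho(\varepsilon^2\zeta)+F_\rho(\psi)=\varepsilon^2\zeta\psi$ into the energy lower bound yields $\mu^{\varepsilon,\rho}\ge\frac{\kappa}{2\pi}\log\frac{1}{\varepsilon}-|1-2\vartheta_0|\rho-C$; subtracting, $\psi^{\varepsilon,\rho}\le\frac{\kappa}{4\pi}\log f(\rho)+|1-2\vartheta_0|\rho+C_0$ with $C_0$ independent of $\varepsilon$ and $\rho$. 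Then (H3) is exactly the statement that one can choose a single $\rho_0$ with $\frac{\kappa}{4\pi}\log f(\rho_0)+|1-2\vartheta_0|\rho_0+C_0\le\rho_0$ (note $1-|1-2\vartheta_0|=\min\{2\vartheta_0,2-2\vartheta_0\}$, which is where $\vartheta_2$ comes from); hence $\psi^{\varepsilon,\rho_0}\le\rho_0$ and the truncation is invisible. Without this (or an equivalent) quantitative chain, your proof is incomplete at its central point.

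A second, structural issue: once the truncation level is fixed at $\rho_0$ independently of $\varepsilon$, there is no limit in the truncation parameter to take at all --- the truncated solution \emph{is} a solution of the original equation. Your proposed $k\to\infty$ compactness argument is therefore not just superfluous but introduces a genuine difficulty you would then have to resolve, namely passing to the limit in $f_k(\phi^\lambda_k-Wx_1-\mu^\lambda_k)$ when $f$ is merely nondecreasing and possibly discontinuous; your appeal to ``the arguments avoid the jump set'' is not justified as stated, since the level sets of $\phi^\lambda-Wx_1-\mu^\lambda$ corresponding to jump values of $f$ need separate treatment (in the paper this issue is handled once, inside the Euler--Lagrange argument of Lemma \ref{le2}, via the fact that level sets of $\mathcal{G}\zeta^\varepsilon-Wx_1$ have measure zero). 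I would also note that the paper's estimates are uniform in the truncation parameter $\rho$ by construction (the constants $C$, $C_0$ are extracted before $\rho_0$ is chosen), so the ``simultaneous uniformity in $k$ and $\lambda$'' you worry about is obtained for free rather than by a delicate interplay with a sharp functional inequality.
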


\begin{remarks}
When $f(s)=s_+^p$ for $p>1$, Smets and Van Schaftingen have obtained desingularization result in \cite{SV}.
\end{remarks}

Roughly speaking, there are two methods to study this problem, namely the stream-function method and the vorticity method. The stream-function method is to find a $\psi$ satisfying \eqref{fr2} with the desired properties; see, e.g., \cite{Amb, Nor, SV, Yang}. The vorticity method focuses on the vorticity of the flow; see, e.g., \cite{Bad, Bu0, Tur83}.  The key idea is to solve a variational problem for the vorticity $\zeta$. Compare to the stream-function method, the vorticity method has a strong physical motivation. By equation \eqref{0-3} the vorticity is conserved along particle trajectories for the flow. Moreover, the fluid impulse is also conserved for all time (see \cite{Mar, MB}). These facts can be used to establish the stability of the solution from the viewpoint of vorticity. We refer the reader to \cite{Abe, Bu3, Bu4, Bu5} for some results in this direction. For this reason, we prefer to construct solutions by the vorticity method. Mathematically, this method can be regarded as a dual variational principle; see \cite{Am, Ber, Bu2, ET, St, Tol} for example. The proofs of Theorems \ref{thm1} and \ref{thm2} are provided in the next section.

\section{Proofs of Theorems \ref{thm1} and \ref{thm2}}
In this section we will give proof for Theorems \ref{thm1} and \ref{thm2}. Let $G(\cdot,\cdot)$ be the Green's function for $-\Delta$ in $\Pi$ with zero Dirichlet data, namely,
\begin{equation*}
  G(x,y)=\frac{1}{2\pi}\log\frac{|\bar{x}-y|}{|x-y|},
\end{equation*}
where $\bar{x}=(-x_1,x_2)$ denotes reflection of $x=(x_1,x_2)$ in the $x_2$-axis. Define the Green's operator $\mathcal{G}$ as follows
\begin{equation*}
  \mathcal{G}\zeta(x)=\int_\Pi G(x,y)\zeta(y)dy.
\end{equation*}

\subsection{Proof of Theorem \ref{thm1}}
In this subsection, we consider the bounded case. For the sake of clarity, we will split the proof into several lemmas.

Let $J$ be the conjugate function to $F$ defined by $J(s)=\sup_{t\in \mathbb{R}}\left(st-F(t)\right)$. We shall use $\partial J(s)$ to denote the subgradient of $J$ at $s$ (see \cite{Roc}).

\subsubsection{Variational problem}
Let $D=\{x\in \Pi~|~r/2<x_1<2r,\ -1<x_2<1\}$ and
\begin{equation*}
\mathcal{A}=\{\zeta\in L^\infty(\Pi)~|~  \zeta \ge 0~ \text{a.e.}, \int_{\Pi}\zeta dx \le \kappa,~ supp(\zeta)\subseteq D \},
\end{equation*}
For $\varepsilon>0$ we define $\mathcal{E}_\varepsilon(\zeta)\in [-\infty,+\infty)$ by
\begin{equation*}
  \mathcal{E}_\varepsilon(\zeta)=\frac{1}{2}\int_D{\zeta \mathcal{G}\zeta}dx-W\int_{D}x_1\zeta dx-\frac{1}{\varepsilon^2}\int_D J(\varepsilon^2\zeta)dx.
\end{equation*}
 We assume that
\begin{equation}\label{2-0}
  \frac{\sup_{\mathbb{R}^2}f}{\varepsilon^2} |D|>\kappa,
\end{equation}
 where $|\cdot|$ denotes the two-dimensional Lebesgue measure.

Note that $\mathcal{E}_\varepsilon\not \equiv -\infty$ on $\mathcal{A}$. We will seek maximizers of $\mathcal{E}_\varepsilon$ relative to $\mathcal{A}$. Let ${\zeta}^*$ be the Steiner symmetrization of $\zeta$ with respect to the line $x_2=0$ in $\Pi$ (see \cite{Bu0, Nor}).

We first have
\begin{lemma}\label{le1}
  There exists $\zeta^{\varepsilon}=(\zeta^\varepsilon)^* \in \mathcal{A}$ such that
\begin{equation*}
 \mathcal{E}_\varepsilon(\zeta^{\varepsilon})= \max_{\tilde{\zeta} \in \mathcal{A}}\mathcal{E}_\varepsilon(\tilde{\zeta})<+\infty.
\end{equation*}
Moreover, one has
\begin{equation}\label{2-1}
  \varepsilon^2\zeta^\varepsilon(x)\le \sup_{x\in \mathbb{R}} f,\ \ \ \forall~x\in \Pi.
\end{equation}
\end{lemma}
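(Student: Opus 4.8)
The plan is to run the direct method of the calculus of variations on $\mathcal{A}$, the only genuine point being that the effective domain of the convex conjugate $J$ is exactly $[0,\sup_{\mathbb{R}^2}f]$, which is simultaneously the source of the $L^\infty$-bound needed for compactness and of the estimate \eqref{2-1}. First I would record the properties of $J$. Since $f$ is bounded, nondecreasing and satisfies (H1), the primitive $F$ is convex, nondecreasing, vanishes on $(-\infty,0]$ and obeys $0\le F(t)\le t\sup f$ for $t\ge0$. Inspecting $J(s)=\sup_t(st-F(t))$ gives that $J$ is convex, lower semicontinuous and $\ge0$ (take $t=0$); that $J(s)=+\infty$ for $s<0$ (let $t\to-\infty$) and for $s>\sup f$ (let $t\to+\infty$, using $F(t)\le t\sup f$); and that $J$ is finite on $[0,\sup f)$ (for such $s$ choose $T$ with $f(T)>s$; then $st-F(t)\to-\infty$ as $t\to+\infty$, so the supremum is attained on a bounded set). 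Consequently, any $\zeta\in\mathcal{A}$ with $\mathcal{E}_\varepsilon(\zeta)>-\infty$ has $\int_D J(\varepsilon^2\zeta)<\infty$, which forces $\varepsilon^2\zeta\le\sup f$ a.e.; thus \eqref{2-1} will be automatic for a maximizer, and every finite-energy competitor is bounded by $\varepsilon^{-2}\sup f$.

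For existence, note first that $\mathcal{E}_\varepsilon\not\equiv-\infty$: by \eqref{2-0}, the function $\tfrac{\kappa}{|D|}\mathbf{1}_D$ lies in $\mathcal{A}$ and satisfies $\varepsilon^2\tfrac{\kappa}{|D|}<\sup f$, so all three terms of its energy are finite. Let $\{\zeta_n\}\subset\mathcal{A}$ be a maximizing sequence; since $\mathcal{E}_\varepsilon(\zeta_n)\to\sup_{\mathcal{A}}\mathcal{E}_\varepsilon>-\infty$, for $n$ large one has $\varepsilon^2\zeta_n\le\sup f$ a.e., so $\{\zeta_n\}$ is bounded in $L^\infty(D)$ and, along a subsequence, $\zeta_n\rightharpoonup\zeta^\varepsilon$ weakly-$*$ in $L^\infty(D)$ (hence weakly in $L^2(D)$). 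The constraints defining $\mathcal{A}$ together with $\varepsilon^2\zeta\le\sup f$ are convex and weak-$*$ closed, so $\zeta^\varepsilon\in\mathcal{A}$. Now $\mathcal{G}$ is self-adjoint, positive and compact on $L^2(D)$ — its kernel $G$ lies in $L^2(D\times D)$ because $D$ is bounded and $G$ has only a logarithmic singularity — whence $\mathcal{G}\zeta_n\to\mathcal{G}\zeta^\varepsilon$ strongly in $L^2(D)$ and $\int_D\zeta_n\mathcal{G}\zeta_n\,dx\to\int_D\zeta^\varepsilon\mathcal{G}\zeta^\varepsilon\,dx$; the linear term is weakly continuous since $x_1\in L^1(D)$; and $\zeta\mapsto\int_D J(\varepsilon^2\zeta)\,dx$ is convex and strongly lower semicontinuous (Fatou, as $J\ge0$ and l.s.c.), hence weakly l.s.c. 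Therefore $\mathcal{E}_\varepsilon$ is weakly upper semicontinuous, $\mathcal{E}_\varepsilon(\zeta^\varepsilon)\ge\limsup_n\mathcal{E}_\varepsilon(\zeta_n)=\sup_{\mathcal{A}}\mathcal{E}_\varepsilon$, each of its three terms is finite at $\zeta^\varepsilon$ so $\mathcal{E}_\varepsilon(\zeta^\varepsilon)<+\infty$, and $\varepsilon^2\zeta^\varepsilon\le\sup f$ by the first step.

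It remains to arrange $\zeta^\varepsilon=(\zeta^\varepsilon)^*$. Steiner symmetrization about the line $x_2=0$ preserves the distribution function of $\zeta$ (so $\int_D J(\varepsilon^2\zeta^*)\,dx=\int_D J(\varepsilon^2\zeta)\,dx$ and $\zeta^*\in\mathcal{A}$, using that $D$ is symmetric about $x_2=0$) and preserves every vertical slice-integral (so $\int_D x_1\zeta^*\,dx=\int_D x_1\zeta\,dx$). Since $G(x,y)=\tfrac1{4\pi}\log\frac{(x_1+y_1)^2+(x_2-y_2)^2}{(x_1-y_1)^2+(x_2-y_2)^2}$ is, for fixed $x_1,y_1>0$, a symmetric decreasing function of $x_2-y_2$, the Riesz rearrangement inequality gives $\int_D\int_D G(x,y)\zeta^*(x)\zeta^*(y)\,dx\,dy\ge\int_D\int_D G(x,y)\zeta(x)\zeta(y)\,dx\,dy$, hence $\mathcal{E}_\varepsilon(\zeta^*)\ge\mathcal{E}_\varepsilon(\zeta)$ on $\mathcal{A}$. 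Replacing $\zeta^\varepsilon$ by $(\zeta^\varepsilon)^*$ completes the proof.

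The main obstacle is the weak upper semicontinuity: the quadratic term is genuinely unbounded above on $\mathcal{A}$ in the absence of an $L^\infty$-bound, so the argument must first extract the ceiling $\varepsilon^2\zeta\le\sup f$ from the concave penalty $-\varepsilon^{-2}\int_D J(\varepsilon^2\zeta)$ and only then invoke compactness of $\mathcal{G}$; the remaining care lies in the semicontinuity bookkeeping (strong continuity of the compact quadratic part against weak lower semicontinuity of the convex nonlinear part) and in the monotonicity of $G$ in $x_2-y_2$ for the rearrangement step.
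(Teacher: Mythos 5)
Your proof is correct and follows essentially the same route as the paper: the direct method with weak-$*$ compactness, strong convergence of the quadratic term, weak lower semicontinuity of the convex penalty, and Steiner symmetrization via the slice-wise Riesz rearrangement inequality (the paper symmetrizes the maximizing sequence rather than the limit, which is immaterial). Your explicit derivation of the ceiling $\varepsilon^2\zeta\le\sup f$ from the effective domain of $J$ is exactly the mechanism the paper relies on, just spelled out in more detail.
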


\begin{proof}
  Notice that the effect domain of $J(\cdot)$ is contained in $(-\infty, \sup f]$. We may take a sequence $\{\zeta_{k}\}\subset \mathcal{A}$ such that as $j\to +\infty$
\begin{equation*}
  \begin{split}
        \mathcal{E}_\varepsilon(\zeta_{k}) & \to \sup\{\mathcal{E}_\varepsilon(\tilde{\zeta})~|~\tilde{\zeta}\in \mathcal{A}\}, \\
        \zeta_{k} & \to \zeta\in L^{\infty}({D})~~\text{weakly-star}.
  \end{split}
\end{equation*}
Clearly one has $\zeta\in \mathcal{A}$. Using the standard arguments (see \cite{Bu0, Nor}), we may assume that $\zeta_{k}=(\zeta_{k})^*$, and hence $\zeta=\zeta^*$. Since $G(\cdot,\cdot)\in L^1(D\times D)$, we have
\begin{equation*}
      \lim_{k\to +\infty}\int_D{\zeta_{k} \mathcal{G}\zeta_{k}}dx = \int_D{\zeta \mathcal{G}\zeta}dx,\ \text{as}\ k\to +\infty.
\end{equation*}
On the other hand, we have the lower semicontinuity of the rest of terms, namely,
\begin{equation*}
  \begin{split}
    \liminf_{k\to +\infty} \int_{D}x_1\zeta_k dx  & \ge  \int_{\Pi}x_1\zeta dx, \\
    \liminf_{k\to +\infty}\int_D J(\varepsilon^2\zeta_k)dx   & \ge \int_D J(\varepsilon^2\zeta)dx.
  \end{split}
\end{equation*}
Consequently, we conclude that $ \mathcal{E}_\varepsilon(\zeta)=\lim_{k\to +\infty} \mathcal{E}_\varepsilon(\zeta_k)=\sup_{\mathcal{A}} \mathcal{E}_\varepsilon$, with $\zeta\in \mathcal{A}$, which completes the proof.
\end{proof}

Since $\zeta^\varepsilon \in L^\infty(\Pi)$, it follows that $\mathcal{G}\zeta^\varepsilon\in C^1_{\text{loc}}(\Pi)$. Moreover, by $\zeta^{\varepsilon}=(\zeta^\varepsilon)^*$ we can conclude that $\mathcal{G}\zeta^\varepsilon$ is symmetric decreasing in $x_2$ and $\partial_{x_2}\mathcal{G}\zeta^\varepsilon(x) <0$ if $\zeta^\varepsilon\not\equiv 0$ and $x_2>0$. Therefore every level set of $\mathcal{G}\zeta^{\varepsilon}-{Wx_1}$ has measure zero by the implicit function theorem.

\begin{lemma}\label{le2}
Let $\zeta^{\varepsilon}$ be a maximizer as in Lemma \ref{le1}, then there exists a Lagrange multiplier $\mu^{\varepsilon}\ge 0$ such that
\begin{equation}\label{2-2}
\zeta^{\varepsilon}=\frac{1}{ \varepsilon^2}f\left(\psi^{\varepsilon}\right) \ \ a.e.\  \text{in}\  D,
\end{equation}
where
\begin{equation*}
 \psi^{\varepsilon}=\mathcal{G}\zeta^{\varepsilon}-Wx_1-\mu^{\varepsilon}.
\end{equation*}
Moreover, it holds $\int_D\zeta^\varepsilon dx=\kappa$ provided $\zeta^\varepsilon\not\equiv0$ and every $\mu^\varepsilon$ satisfying \eqref{2-2} is positive.
\end{lemma}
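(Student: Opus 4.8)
The plan is to read the Euler--Lagrange system \eqref{2-2} off a one-sided first-variation inequality at the maximizer, to extract the Lagrange multiplier $\mu^\varepsilon$ from the single scalar constraint hidden in $\mathcal A$, and finally to upgrade this to the mass identity and the positivity of $\mu^\varepsilon$ by testing $\zeta^\varepsilon$ against rescaled and localized competitors.

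\emph{Step 1: variational inequality.} Fix $\tilde\zeta\in\mathcal A$. Since $\mathcal A$ is convex, $\zeta_t:=\zeta^\varepsilon+t(\tilde\zeta-\zeta^\varepsilon)\in\mathcal A$ for all $t\in[0,1]$, so maximality gives $\mathcal E_\varepsilon(\zeta_t)\le\mathcal E_\varepsilon(\zeta^\varepsilon)$. The quadratic term $\tfrac12\int_D\zeta_t\mathcal G\zeta_t\,dx$ and the linear term $-W\int_D x_1\zeta_t\,dx$ are polynomial in $t$ (using the symmetry of $\mathcal G$), while convexity of $J$ yields $J(\varepsilon^2\zeta_t)\le(1-t)J(\varepsilon^2\zeta^\varepsilon)+tJ(\varepsilon^2\tilde\zeta)$, which bounds the one-sided difference quotient of $-\tfrac1{\varepsilon^2}\int_DJ(\varepsilon^2\zeta_t)\,dx$ from below. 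Dividing $\mathcal E_\varepsilon(\zeta_t)-\mathcal E_\varepsilon(\zeta^\varepsilon)\le0$ by $t>0$ and letting $t\to0^+$ gives
\begin{equation*}
\int_D(\tilde\zeta-\zeta^\varepsilon)\bigl(\mathcal G\zeta^\varepsilon-Wx_1\bigr)\,dx\le\frac1{\varepsilon^2}\int_D\bigl(J(\varepsilon^2\tilde\zeta)-J(\varepsilon^2\zeta^\varepsilon)\bigr)\,dx\qquad\text{for all }\tilde\zeta\in\mathcal A,
\end{equation*}
which is equivalent to saying that $\zeta^\varepsilon$ maximizes the concave functional $\tilde\zeta\mapsto\int_D\tilde\zeta(\mathcal G\zeta^\varepsilon-Wx_1)\,dx-\tfrac1{\varepsilon^2}\int_DJ(\varepsilon^2\tilde\zeta)\,dx$ over $\mathcal A$.

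\emph{Step 2: multiplier and pointwise identity.} The set $\mathcal A$ differs from $\{\tilde\zeta\in L^\infty(\Pi):\tilde\zeta\ge0,\ \mathrm{supp}\,\tilde\zeta\subseteq D\}$ only through the single scalar constraint $\int_D\tilde\zeta\,dx\le\kappa$, so a bathtub/Lagrange-multiplier argument produces $\mu^\varepsilon\ge0$ with the complementary slackness $\mu^\varepsilon\bigl(\kappa-\int_D\zeta^\varepsilon\,dx\bigr)=0$ such that, with $\psi^\varepsilon:=\mathcal G\zeta^\varepsilon-Wx_1-\mu^\varepsilon$, the function $\zeta^\varepsilon$ maximizes $\tilde\zeta\mapsto\int_D\bigl(\tilde\zeta\,\psi^\varepsilon-\tfrac1{\varepsilon^2}J(\varepsilon^2\tilde\zeta)\bigr)\,dx$ over $\{\tilde\zeta\ge0,\ \mathrm{supp}\,\tilde\zeta\subseteq D\}$. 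This functional is separable in $x$, hence for a.e.\ $x$ the number $\zeta^\varepsilon(x)$ maximizes $s\mapsto s\,\psi^\varepsilon(x)-\tfrac1{\varepsilon^2}J(\varepsilon^2 s)$ over $s\ge0$; by the Legendre--Fenchel duality between $F$ and $J$, this maximizer equals $0$ when $\psi^\varepsilon(x)\le0$ (by (H1), using $\mathrm{dom}\,J\subseteq[0,\sup f]$) and otherwise satisfies $\varepsilon^2s\in\partial F(\psi^\varepsilon(x))=[f(\psi^\varepsilon(x)^-),f(\psi^\varepsilon(x)^+)]$. Because every level set of $\mathcal G\zeta^\varepsilon-Wx_1$ has measure zero (as noted after Lemma \ref{le1}), for a.e.\ $x$ the value $\psi^\varepsilon(x)$ is neither $0$ nor one of the at most countably many jump points of $f$, so $\partial F(\psi^\varepsilon(x))=\{f(\psi^\varepsilon(x))\}$, and $\varepsilon^2\zeta^\varepsilon(x)=f(\psi^\varepsilon(x))$ a.e.\ in $D$, which is \eqref{2-2}.

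\emph{Step 3: mass identity and positivity of $\mu^\varepsilon$.} Assume $\zeta^\varepsilon\not\equiv0$. Testing maximality against the rescalings $(1+t)\zeta^\varepsilon$ and using \eqref{2-2} together with the Young equality $J(\varepsilon^2\zeta^\varepsilon)=\varepsilon^2\zeta^\varepsilon\psi^\varepsilon-F(\psi^\varepsilon)$, one finds $\tfrac{d}{dt}\big|_{t=0^+}\mathcal E_\varepsilon\bigl((1+t)\zeta^\varepsilon\bigr)=\mu^\varepsilon\int_D\zeta^\varepsilon\,dx$ (truncating at the level $\varepsilon^{-2}\sup f$ where this is needed to keep the competitor in the domain of $\mathcal E_\varepsilon$); hence, if $\mu^\varepsilon>0$, the competitor $(1+t)\zeta^\varepsilon$ strictly increases $\mathcal E_\varepsilon$ for small $t>0$ unless the mass constraint blocks it, forcing $\int_D\zeta^\varepsilon\,dx=\kappa$. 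It therefore remains to exclude $\mu^\varepsilon=0$ for any $\mu^\varepsilon$ satisfying \eqref{2-2}: in that case $\psi^\varepsilon=\mathcal G\zeta^\varepsilon-Wx_1\le0$ on $D\setminus\mathrm{supp}\,\zeta^\varepsilon$ and the mass constraint is inactive, so one compares $\mathcal E_\varepsilon(\zeta^\varepsilon)$ with its value on a competitor $\zeta^\varepsilon+c\chi_\omega$ for a small ball $\omega\subseteq D$ (disjoint from $\mathrm{supp}\,\zeta^\varepsilon$) and a density $c$ close to $\varepsilon^{-2}\sup f$; the quadratic self-interaction $\tfrac{c^2}2\iint_{\omega\times\omega}G\,dx\,dy$ dominates the linear cost $-Wc\int_\omega x_1\,dx$ and the penalty increment $\varepsilon^{-2}|\omega|J(\varepsilon^2c)$ once $\omega$ is chosen appropriately small (here \eqref{2-0} and the bound \eqref{2-1} guarantee that such an $\omega$ with $\varepsilon^{-2}|\omega|\le\kappa-\int_D\zeta^\varepsilon\,dx$ exists), contradicting maximality. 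Consequently $\mu^\varepsilon>0$ and then $\int_D\zeta^\varepsilon\,dx=\kappa$. I expect this last step — ruling out $\mu^\varepsilon=0$, equivalently showing that the free, unconstrained-mass maximizer already carries mass $\ge\kappa$ — to be the main obstacle, since it is the only place where the precise constants \eqref{2-0}, \eqref{2-1} and the geometry of $D$ are genuinely used, and it requires a quantitative rather than infinitesimal comparison (the first variation at $s=0$ vanishes because $J$ has zero right-derivative there, so only a bump of definite size, not an infinitesimal one, detects the instability).
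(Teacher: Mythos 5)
Your Steps 1--2 and the first half of Step 3 are correct and take a genuinely different route from the paper. The paper fixes $\mu^\varepsilon$ in advance by an intermediate-value argument on the mass function $\tau(t)=\varepsilon^{-2}\int_Df(\mathcal G\zeta^\varepsilon-Wx_1+t)\,dx$, forms the explicit competitor $\tilde\zeta^\varepsilon=\varepsilon^{-2}f(\mathcal G\zeta^\varepsilon-Wx_1-\mu^\varepsilon)\chi_D$, and forces $\zeta^\varepsilon=\tilde\zeta^\varepsilon$ by squeezing the positive-definite quantity $\int(\tilde\zeta^\varepsilon-\zeta^\varepsilon)\mathcal G(\tilde\zeta^\varepsilon-\zeta^\varepsilon)$ between the two convexity inequalities \eqref{2-4}--\eqref{2-5}; the mass identity is then obtained by exhibiting a competitor $\hat\zeta^\varepsilon$ with strictly larger mass and deriving a strict contradiction. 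You instead linearize at the maximizer, extract the multiplier from the single scalar constraint by concave duality with complementary slackness, and read off \eqref{2-2} from pointwise maximization of the separable integrand together with Legendre--Fenchel duality and the measure-zero level sets. Both arguments are sound; yours avoids the explicit competitor but leans on the (standard, and worth stating carefully in this infinite-dimensional setting) KKT/complementary-slackness step, which the paper's hands-on comparison replaces.

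The problem is your reading of the ``moreover'' clause, and the last part of Step 3 built on it. The lemma does \emph{not} assert that $\mu^\varepsilon=0$ is impossible; the positivity of every multiplier satisfying \eqref{2-2} is a \emph{hypothesis} of the conditional statement, verified only later and only for small $\varepsilon$ via the energy lower bound of Lemma \ref{le6}. Under that hypothesis your own Step 2 already finishes the job: the multiplier you constructed satisfies \eqref{2-2}, is therefore positive by assumption, and complementary slackness (or your rescaling computation) gives $\int_D\zeta^\varepsilon\,dx=\kappa$. The bump construction you then offer to exclude $\mu^\varepsilon=0$ is both superfluous and, as written, does not work: the self-interaction gain of the added patch is of order $m^2\log(1/\delta)$ where $m=c|\omega|$ is the added mass, and $m$ is capped by $\kappa-\int_D\zeta^\varepsilon\,dx$, a quantity over which you have no control at this stage --- if the mass deficit is small compared with $1/\log(1/\varepsilon)$ the quadratic gain cannot beat the linear cost $-Wc\int_\omega x_1\,dx$. (Taking $c$ ``close to $\varepsilon^{-2}\sup f$'' is also dangerous, since $J(s)$ may blow up as $s\to\sup f$.) So the step you flag as ``the main obstacle'' should simply be deleted, not repaired.
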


\begin{proof}
  We may assume that $\zeta^\varepsilon\not\equiv0$, otherwise the assertion is trivial. Let the function $\tau: \mathbb{R}\to [0,+\infty)$ be
 \begin{equation*}
   \tau(t)=\frac{1}{\varepsilon^2}\int_D f\left(\mathcal{G}\zeta^{\varepsilon}-Wx_1+t\right)dx.
 \end{equation*}
 It is clear that $\tau(t)$ is monotone increasing and $\lim_{t\to -\infty}\tau(t)=0$, $\lim_{t\to +\infty}\tau(t)>\kappa$ by assumption \eqref{2-0}. Recall that every level set of $\mathcal{G}\zeta^\varepsilon-Wx_1$ has measure zero and $f$ has at most countable discontinuities. Using these facts, it is easy to verify that $\tau(t)$ is a continuous function. So there exists a $\mu^\varepsilon\in \mathbb{R}$ such that $\tau(-\mu^\varepsilon)=\int_D \zeta^\varepsilon dx$. Let
 \begin{equation*}
    \tilde{\zeta}^\varepsilon=\frac{1}{\varepsilon^2}f\left( \mathcal{G}\zeta^{\varepsilon}-Wx_1-\mu^\varepsilon\right)\chi_{_{D}}.
 \end{equation*}
Then $\tilde{\zeta}^\varepsilon\in \mathcal{A}$. We now claim that $\zeta^\varepsilon= \tilde{\zeta}^\varepsilon$. Indeed, let $\zeta=(\zeta^\varepsilon+\tilde{\zeta}^\varepsilon)/2 \in \mathcal{A}$. Using the convexity of $J(\cdot)$, we derive from $\mathcal{E}_\varepsilon(\zeta^\varepsilon
)\ge \mathcal{E}_\varepsilon(\zeta)$:
\begin{equation}\label{2-4}
\begin{split}
     \frac{1}{\varepsilon^2}\int_D J(\varepsilon^2\tilde{\zeta}^\varepsilon)dx&- \frac{1}{\varepsilon^2}\int_D J(\varepsilon^2{\zeta}^\varepsilon)dx\ge \\
     & \int_D\left(\mathcal{G}\zeta^{\varepsilon}-Wx_1\right)\left(\tilde{\zeta}^\varepsilon-{\zeta}^\varepsilon\right)dx+\frac{1}{4}\int_D \left(\tilde{\zeta}^\varepsilon-{\zeta}^\varepsilon\right)\mathcal{G}\left(\tilde{\zeta}^\varepsilon-{\zeta}^\varepsilon\right)dx.
\end{split}
\end{equation}
 Since $\partial J(\cdot)$ and $f(\cdot)$ are inverse graphs, it follows that
\begin{equation*}
  \mathcal{G}\zeta^{\varepsilon}-Wx_1-\mu^\varepsilon\in \partial J(\tilde{\zeta}^\varepsilon),\ \ \text{in}\ D.
\end{equation*}
By the convexity of $J(\cdot)$, we then have
\begin{equation}\label{2-5}
   \frac{1}{\varepsilon^2}\int_D J(\varepsilon^2\tilde{\zeta}^\varepsilon)dx- \frac{1}{\varepsilon^2}\int_D J(\varepsilon^2{\zeta}^\varepsilon)dx\le \int_D \left(  \mathcal{G}\zeta^{\varepsilon}-Wx_1-\mu^\varepsilon  \right)\left(\tilde{\zeta}^\varepsilon-{\zeta}^\varepsilon\right)dx.
\end{equation}
 Observing that $\int_D (\tilde{\zeta}^\varepsilon-{\zeta}^\varepsilon)dx=0$, we derive from \eqref{2-4} and \eqref{2-5} that
 \begin{equation*}
   \int_D \left(\tilde{\zeta}^\varepsilon-{\zeta}^\varepsilon\right)\mathcal{G}\left(\tilde{\zeta}^\varepsilon-{\zeta}^\varepsilon\right)dx\le 0
 \end{equation*}
 whence $\zeta^\varepsilon= \tilde{\zeta}^\varepsilon$. We further show that we can replace $\mu^\varepsilon$ with $\max\{\mu^\varepsilon, 0\}$. Suppose now $\mu^\varepsilon<0$, then
 \begin{equation*}
    \bar{\zeta}^\varepsilon=\frac{1}{\varepsilon^2}f\left(\mathcal{G}\zeta^{\varepsilon}-Wx_1\right)\chi_{_{D}}\in \mathcal{A}.
 \end{equation*}
As argued above, we now have
  \begin{equation*}
   \int_D \left(\bar{\zeta}^\varepsilon-{\zeta}^\varepsilon\right)\mathcal{G}\left(\bar{\zeta}^\varepsilon-{\zeta}^\varepsilon\right)d\nu\le 0,
 \end{equation*}
 which implies $\zeta^\varepsilon= \bar{\zeta}^\varepsilon$.

It remains to show that $\int_D\zeta^\varepsilon d\nu =\kappa$ if $\zeta^\varepsilon\not\equiv0$ and every $\mu^\varepsilon$ satisfying \eqref{2-2} is positive. We argue by contradiction. Suppose $\int_D\zeta^\varepsilon d\nu<\kappa$, then we must have $\tau(0)>\kappa$. Indeed, if $\tau(0)=\kappa$, then $\zeta^\varepsilon= \bar{\zeta}^\varepsilon$ as explained above. This is a contradiction. So
we can find a number $\hat{\mu}^\varepsilon>0$, such that
\begin{equation*}
  \hat{\zeta}^\varepsilon:=\frac{1}{\varepsilon^2}f\left( \mathcal{G}\zeta^{\varepsilon}-Wx_1-\hat{\mu}^\varepsilon\right)\chi_{_{D}}\in \mathcal{A}\ \ \text{and}\ \int_D \hat{\zeta}^\varepsilon d\nu>\int_D{\zeta}^\varepsilon d\nu.
\end{equation*}
Arguing as before, we can now obtain
  \begin{equation*}
   \int_D \left(\hat{\zeta}^\varepsilon-{\zeta}^\varepsilon\right)\mathcal{G}\left(\hat{\zeta}^\varepsilon-{\zeta}^\varepsilon\right)d\nu< 0.
 \end{equation*}
  This leads to a contradiction and the proof is thus complete.
\end{proof}

\subsubsection{Asymptotic behavior}
In the following, we study the asymptotic behavior of $\zeta^{\varepsilon}$ when $\varepsilon \to 0^+$. In the sequel we shall denote $C$ for positive constant independent of $\varepsilon$.

To begin with, we give a lower bound of the energy.
\begin{lemma}\label{le3}
 There exists $C>0$ such that
\begin{equation*}
  \mathcal{E}_\varepsilon(\zeta^{\varepsilon})\ge \frac{\kappa^2}{4\pi}\log{\frac{1}{\varepsilon}}-C.
\end{equation*}
\end{lemma}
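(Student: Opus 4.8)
The plan is to obtain the lower bound by plugging a well-chosen test function into the functional $\mathcal{E}_\varepsilon$ and using that $\zeta^\varepsilon$ is a maximizer. The natural candidate is a normalized indicator of a small disk: let $z_0 = r\mathbf{e}_1 = b_1 \in D$, and for $\rho = \rho(\varepsilon)$ small set $\zeta_{\mathrm{test}} = \frac{\kappa}{\pi\rho^2}\chi_{B_\rho(z_0)}$. One checks $\zeta_{\mathrm{test}}\in\mathcal{A}$ once $B_\rho(z_0)\subset D$ and the amplitude constraint $\varepsilon^2\frac{\kappa}{\pi\rho^2}\le \sup f$ holds, which forces the choice $\rho \sim c\varepsilon$; this is consistent with \eqref{2-0}. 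Since $\zeta_{\mathrm{test}}$ need not equal its own Steiner symmetrization issue does not arise — $B_\rho(z_0)$ is already symmetric about $x_2=0$ — so $\mathcal{E}_\varepsilon(\zeta^\varepsilon)\ge \mathcal{E}_\varepsilon(\zeta_{\mathrm{test}})$ directly, and it remains to estimate the three terms on the right.

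For the quadratic term I would use $G(x,y) = \frac{1}{2\pi}\log\frac{1}{|x-y|} + \frac{1}{2\pi}\log|\bar x - y|$ and compute $\frac12\int_D \zeta_{\mathrm{test}}\,\mathcal{G}\zeta_{\mathrm{test}}\,dx$. The singular part $\frac{1}{4\pi}\int\!\!\int \frac{\kappa^2}{\pi^2\rho^4}\log\frac{1}{|x-y|}\chi_{B_\rho}\chi_{B_\rho}$ equals $\frac{\kappa^2}{4\pi}\log\frac{1}{\rho} + O(1)$ by the standard computation of the logarithmic self-energy of a uniform disk (the $O(1)$ constant is explicit, $\frac{\kappa^2}{16\pi}$), while the regular part $\frac{1}{4\pi}\int\!\!\int \log|\bar x - y|\,\zeta_{\mathrm{test}}\zeta_{\mathrm{test}}$ converges to $\frac{\kappa^2}{4\pi}\log|\bar z_0 - z_0| = \frac{\kappa^2}{4\pi}\log(2r) = O(1)$ as $\rho\to0$ since $\bar z_0\ne z_0$ (here $z_0$ is bounded away from the $x_2$-axis inside $D$). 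Inserting $\rho\sim c\varepsilon$ gives $\frac12\int_D \zeta_{\mathrm{test}}\mathcal{G}\zeta_{\mathrm{test}}\,dx \ge \frac{\kappa^2}{4\pi}\log\frac1\varepsilon - C$. The advection term is trivially bounded: $|W\int_D x_1\zeta_{\mathrm{test}}\,dx|\le 2rW\kappa = O(1)$. For the $J$-term, since $J=\sup_t(st - F(t))\ge 0\cdot t - F(0) = 0$ and more usefully $J(s)\le s\cdot f^{-1}(s) $-type bounds are not needed — one only needs an \emph{upper} bound on $\frac{1}{\varepsilon^2}\int_D J(\varepsilon^2\zeta_{\mathrm{test}})\,dx$, i.e. a lower bound on $-\frac{1}{\varepsilon^2}\int_D J(\varepsilon^2\zeta_{\mathrm{test}})$. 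On $B_\rho(z_0)$, $\varepsilon^2\zeta_{\mathrm{test}} = \varepsilon^2\kappa/(\pi\rho^2)$ is a fixed constant $a\le \sup f$, so $\frac{1}{\varepsilon^2}\int_D J(\varepsilon^2\zeta_{\mathrm{test}})\,dx = \frac{\pi\rho^2}{\varepsilon^2}J(a) = \frac{\kappa}{a}J(a)$, which is a bounded constant because $J(a)\le J(\sup f)<+\infty$ (recall $J$ is finite on $(-\infty,\sup f]$, being the Legendre conjugate of $F$ which grows at least linearly). Hence this term is $O(1)$ too.

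Combining the three estimates yields $\mathcal{E}_\varepsilon(\zeta^\varepsilon)\ge \mathcal{E}_\varepsilon(\zeta_{\mathrm{test}})\ge \frac{\kappa^2}{4\pi}\log\frac1\varepsilon - C$, as claimed. The main obstacle — really the only point requiring care — is the choice of the radius $\rho$ in terms of $\varepsilon$: it must be small enough that $B_\rho(z_0)\subset D$ and that the regular part of the energy has converged to its limit, yet it must scale like $\varepsilon$ (up to a constant) so that the amplitude constraint $\varepsilon^2\zeta_{\mathrm{test}}\le\sup f$ is saturated and the singular self-energy produces exactly the coefficient $\frac{\kappa^2}{4\pi}$ in front of $\log\frac1\varepsilon$; any smaller power of $\varepsilon$ would violate the amplitude bound, and any larger scale would lose the logarithmic divergence. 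One should also double-check that $J(a)$ stays bounded uniformly as $a\uparrow\sup f$, which follows from (H1) together with $F$ being finite and superlinear, so $J$ is continuous on its closed effective domain.
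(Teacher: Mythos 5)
Your overall strategy is exactly the paper's: test the maximizer against a uniform patch of mass $\kappa$ supported in a ball of radius $\sim\varepsilon$ centered on the $x_1$-axis, compute the logarithmic self-energy to extract $\frac{\kappa^2}{4\pi}\log\frac{1}{\varepsilon}$, and observe that the advection and $J$-terms are $O(1)$. The estimates of the quadratic and advection terms are fine.

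There is, however, one genuine flaw in your treatment of the $J$-term. You insist on \emph{saturating} the amplitude constraint, i.e.\ taking $a=\varepsilon^2\zeta_{\mathrm{test}}=\sup f$, and you justify $\tfrac{\kappa}{a}J(a)=O(1)$ by claiming $J(\sup f)<+\infty$ because ``$F$ is finite and superlinear, so $J$ is continuous on its closed effective domain.'' Both halves of that sentence are wrong in the bounded setting of Theorem \ref{thm1}: $F(t)=\int_0^t f\le t\sup f$ is at most linear, and a proper convex l.s.c.\ function need not be finite at the endpoint of its effective domain. Concretely, take $f(s)=1-\frac{1}{1+s}$ for $s>0$ (bounded, nondecreasing, satisfies (H1)); then $F(t)=t-\log(1+t)$ and
\begin{equation*}
J(1)=\sup_{t\ge 0}\bigl(t-F(t)\bigr)=\sup_{t\ge 0}\log(1+t)=+\infty,
\end{equation*}
so with your saturated choice $\mathcal{E}_\varepsilon(\zeta_{\mathrm{test}})=-\infty$ and the test function yields nothing. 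The fix is one line and is precisely what the paper does: take the amplitude to be $\frac{\sup f}{2\varepsilon^2}$ (equivalently $a=\sup f/2$, radius $\varepsilon\sqrt{2\kappa/(\pi\sup f)}$), so that the argument of $J$ sits strictly inside the effective domain, where $J$ is bounded; this only changes the $O(1)$ constant. Relatedly, your remark that ``any larger scale would lose the logarithmic divergence'' overstates the rigidity — any fixed $a\in(0,\sup f)$, i.e.\ any radius $C\varepsilon$, works; saturation is neither needed nor (as above) safe.
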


\begin{proof}
  The key idea is to select a suitable test function. Let $x_0=(a,0)\in D$ and $$\tilde{\zeta}^\varepsilon=\frac{\sup f}{2\varepsilon^2} \chi_{_{B_{\varepsilon\sqrt{{2\kappa}/{\pi\sup f}}}(x_0)}},$$
where $\chi_{_A}$ denotes the characteristic function of a set $A$.
It is clear that $\tilde{\zeta}^{\varepsilon} \in \mathcal{A}$ if $\varepsilon$ is sufficiently small. Note that the effective domain of $J$ is contained in $[0,\sup f]$, hence $J$ is bounded on $[0,\sup f/2]$. By a simple calculation, we get
\begin{equation*}
  \mathcal{E}(\tilde{\zeta}^{\varepsilon})\ge \frac{\kappa^2}{4\pi}\ln{\frac{1}{\varepsilon}}-C.
\end{equation*}
Since $\zeta^\varepsilon$ is a maximizer, we have $\mathcal{E}(\zeta^\varepsilon)\ge \mathcal{E}(\tilde{\zeta}^{\varepsilon})$ and the proof is thus complete.
\end{proof}

Let us introduce the energy of the vortex core as follows
\begin{equation*}
  \mathcal{I}_\varepsilon=\int_D \zeta^\varepsilon \psi^\varepsilon dx.
\end{equation*}

\begin{lemma}\label{le4}
 $\mathcal{I}_\varepsilon\le C$.
\end{lemma}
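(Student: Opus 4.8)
The plan is to bound the vortex-core energy $\mathcal{I}_\varepsilon = \int_D \zeta^\varepsilon \psi^\varepsilon\, dx$ from above by exploiting the Euler--Lagrange relation \eqref{2-2}, namely $\varepsilon^2\zeta^\varepsilon = f(\psi^\varepsilon)$ on $D$. First I would write $\mathcal{I}_\varepsilon = \varepsilon^{-2}\int_D f(\psi^\varepsilon)\psi^\varepsilon\, dx$ and try to compare the integrand with $F(\psi^\varepsilon)$. Here the two cases diverge. In the bounded case (Theorem \ref{thm1}, which is what this subsection is proving), $f$ is bounded, so $f(\psi^\varepsilon)\psi^\varepsilon \le (\sup f)\,\psi^\varepsilon_+$, and it suffices to control $\int_D \psi^\varepsilon_+\, dx$. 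One route: since $\psi^\varepsilon > 0$ exactly on $A^\varepsilon := \{x \in D : \psi^\varepsilon(x) > 0\}$, and $\zeta^\varepsilon = \varepsilon^{-2}f(\psi^\varepsilon) \ge \varepsilon^{-2}\cdot(\text{something positive})$ there whenever $\psi^\varepsilon$ is bounded below, we can get an upper bound on $|A^\varepsilon|$ of order $\varepsilon^2$ from the mass constraint $\int_D \zeta^\varepsilon\, dx \le \kappa$. Then $\int_D \psi^\varepsilon_+\, dx \le \|\psi^\varepsilon\|_{L^\infty(D)}|A^\varepsilon|$, and one needs an $L^\infty$ bound on $\psi^\varepsilon_+$.

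For the $L^\infty$ bound on $\psi^\varepsilon_+$: we have $\psi^\varepsilon = \mathcal{G}\zeta^\varepsilon - Wx_1 - \mu^\varepsilon \le \mathcal{G}\zeta^\varepsilon$, and $\mathcal{G}\zeta^\varepsilon(x) = \int_\Pi G(x,y)\zeta^\varepsilon(y)\, dy$. Using $G(x,y) \le \frac{1}{2\pi}\log\frac{C}{|x-y|}$ on the bounded set $D$, together with $0 \le \zeta^\varepsilon \le \varepsilon^{-2}\sup f$ (from \eqref{2-1}) and $\int \zeta^\varepsilon \le \kappa$, a standard rearrangement/truncation estimate gives $\mathcal{G}\zeta^\varepsilon(x) \le \frac{\kappa}{4\pi}\log\frac{1}{\varepsilon^2} + C = \frac{\kappa}{2\pi}\log\frac{1}{\varepsilon} + C$ — split the mass into the part on $B_{\varepsilon}(x)$ (where the kernel is large but the mass is at most $\varepsilon^{-2}\sup f \cdot \pi\varepsilon^2 = \pi\sup f$, a constant, so its contribution is $\le C$ after integrating $\log(1/|x-y|)$) and the part outside $B_\varepsilon(x)$ (where the kernel is $\le \frac{1}{2\pi}\log\frac1\varepsilon + C$ and the mass is $\le \kappa$). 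Hence $\|\psi^\varepsilon_+\|_\infty \le \frac{\kappa}{2\pi}\log\frac1\varepsilon + C$. Combining with $|A^\varepsilon| \le C\varepsilon^2$ would give $\mathcal{I}_\varepsilon \le \varepsilon^{-2}(\sup f)\cdot C\varepsilon^2 \cdot (\frac{\kappa}{2\pi}\log\frac1\varepsilon + C)$, which is $O(\log\frac1\varepsilon)$, not $O(1)$ — so this crude bound is not quite enough and the argument must be sharpened.

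The sharper approach, which I expect is the intended one, is to avoid the $L^\infty$ bound on $\psi^\varepsilon$ where it is large and instead bound $\mathcal{I}_\varepsilon$ directly via the variational characterization together with Lemma \ref{le3}. Write $\mathcal{I}_\varepsilon = \int_D \zeta^\varepsilon \mathcal{G}\zeta^\varepsilon\, dx - W\int_D x_1\zeta^\varepsilon\, dx - \mu^\varepsilon\int_D \zeta^\varepsilon\, dx$. Using $\mathcal{E}_\varepsilon(\zeta^\varepsilon) = \frac12\int_D\zeta^\varepsilon\mathcal{G}\zeta^\varepsilon - W\int_D x_1\zeta^\varepsilon - \varepsilon^{-2}\int_D J(\varepsilon^2\zeta^\varepsilon)$, one gets $\mathcal{I}_\varepsilon = \mathcal{E}_\varepsilon(\zeta^\varepsilon) + \frac12\int_D\zeta^\varepsilon\mathcal{G}\zeta^\varepsilon + \varepsilon^{-2}\int_D J(\varepsilon^2\zeta^\varepsilon) - \mu^\varepsilon\kappa$, so an upper bound on $\mathcal{I}_\varepsilon$ follows from an upper bound on $\frac12\int_D\zeta^\varepsilon\mathcal{G}\zeta^\varepsilon$ (which is $\le \frac{\kappa^2}{4\pi}\log\frac1\varepsilon + C$ by the kernel estimate above), a bound on the convex term $\varepsilon^{-2}\int J(\varepsilon^2\zeta^\varepsilon)$ (which is $O(1)$ since $J$ is bounded on the effective domain $[0,\sup f]$ in the bounded case), and a lower bound $\mu^\varepsilon \ge \frac{\kappa}{4\pi}\log\frac1\varepsilon - C$, i.e. $\mu^\varepsilon\kappa \ge \frac{\kappa^2}{4\pi}\log\frac1\varepsilon - C$. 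That last lower bound on $\mu^\varepsilon$ is the crux; it should come from testing the relation $\psi^\varepsilon > 0$ on a set of mass $\kappa$ against the upper bound $\psi^\varepsilon \le \mathcal{G}\zeta^\varepsilon - Wx_1 - \mu^\varepsilon$ evaluated suitably, combined again with Lemma \ref{le3}: indeed $0 < \int_{A^\varepsilon}\psi^\varepsilon \le \int_D\zeta^\varepsilon\mathcal{G}\zeta^\varepsilon - W\int x_1\zeta^\varepsilon - \mu^\varepsilon\kappa$ gives $\mu^\varepsilon \kappa \le \int\zeta^\varepsilon\mathcal{G}\zeta^\varepsilon + C$, which is the wrong direction; the right direction uses that $\mathcal{E}_\varepsilon(\zeta^\varepsilon)$ is large while $\int\zeta^\varepsilon\mathcal{G}\zeta^\varepsilon \le \frac{\kappa^2}{4\pi}\log\frac1\varepsilon + C$ forces $-\mu^\varepsilon\kappa \ge 2\mathcal{E}_\varepsilon - \int\zeta^\varepsilon\mathcal{G}\zeta^\varepsilon - C$... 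I would expect the clean statement to be: from Lemma \ref{le3} and $\frac12\int\zeta^\varepsilon\mathcal{G}\zeta^\varepsilon \le \frac{\kappa^2}{4\pi}\log\frac1\varepsilon + C$ one deduces both that $\frac12\int\zeta^\varepsilon\mathcal{G}\zeta^\varepsilon = \frac{\kappa^2}{4\pi}\log\frac1\varepsilon + O(1)$ and $\mu^\varepsilon = \frac{\kappa}{4\pi}\log\frac1\varepsilon + O(1)$, and then $\mathcal{I}_\varepsilon \le C$ follows by the algebraic identity above. The main obstacle is precisely pinning down the two-sided estimate on $\mu^\varepsilon$ and on $\int\zeta^\varepsilon\mathcal{G}\zeta^\varepsilon$ simultaneously from the single lower bound of Lemma \ref{le3}; everything else is bookkeeping with the Green kernel on the bounded domain $D$.
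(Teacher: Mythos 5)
There is a genuine gap here: neither of your two routes closes, and the second one is circular. You correctly abandon the first (pointwise $L^\infty$) route, which only yields $O(\log\frac{1}{\varepsilon})$. Your second route reduces the claim to a lower bound on the Lagrange multiplier of the form $\mu^\varepsilon\ge \frac{\kappa}{2\pi}\log\frac{1}{\varepsilon}-C$ (note you would in fact need the coefficient $\frac{\kappa}{2\pi}$, not $\frac{\kappa}{4\pi}$, to cancel $\int_D\zeta^\varepsilon\mathcal{G}\zeta^\varepsilon\le\frac{\kappa^2}{2\pi}\log\frac{1}{\varepsilon}+C$). But that lower bound is exactly Lemma \ref{le6}, and in the paper its proof runs through the identity
\begin{equation*}
2\mathcal{E}_\varepsilon(\zeta^\varepsilon)=\int_D \zeta^\varepsilon \psi^\varepsilon\, dx-W\int_D x_1 \zeta^\varepsilon\, dx-2\mathcal{J}_\varepsilon+\mu^\varepsilon \int_D \zeta^\varepsilon\, dx\le C+\kappa\mu^\varepsilon,
\end{equation*}
where the step ``$\le C+\kappa\mu^\varepsilon$'' uses $\mathcal{I}_\varepsilon\le C$ as an input. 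So deriving Lemma \ref{le4} from the $\mu^\varepsilon$-bound assumes the conclusion. From Lemma \ref{le3} and the kernel estimate alone you can indeed extract $\int_D\zeta^\varepsilon\mathcal{G}\zeta^\varepsilon=\frac{\kappa^2}{2\pi}\log\frac{1}{\varepsilon}+O(1)$ and $\mathcal{J}_\varepsilon\le C$, but no lower bound on $\mu^\varepsilon$; you flagged this obstacle yourself and did not resolve it.

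The missing idea is an elementary, self-contained elliptic estimate that makes no reference to $\mu^\varepsilon$ or to Lemma \ref{le3}. Test $-\Delta\psi^\varepsilon=\zeta^\varepsilon$ against $\psi^\varepsilon_+\in H^1_0(\Pi)$ to get $\int_\Pi|\nabla\psi^\varepsilon_+|^2\,dx=\mathcal{I}_\varepsilon$ (this is \eqref{2-6}; note $\zeta^\varepsilon\psi^\varepsilon=\zeta^\varepsilon\psi^\varepsilon_+$ since $f(\psi^\varepsilon)>0$ forces $\psi^\varepsilon>0$). Then split $\mathcal{I}_\varepsilon\le\int_D\zeta^\varepsilon(\psi^\varepsilon-1)_+\,dx+\kappa$. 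On $D_1^\varepsilon=\{\psi^\varepsilon>1\}$ one has $\zeta^\varepsilon\ge f(1)/\varepsilon^2$ by \eqref{2-2} and monotonicity, so the mass constraint gives $|D_1^\varepsilon|\le C\varepsilon^2$; H\"older's inequality with $\|\zeta^\varepsilon\|_{L^\infty}\le\varepsilon^{-2}\sup f$ from \eqref{2-1}, followed by the two-dimensional embedding $W^{1,1}(D)\hookrightarrow L^2(D)$ applied to $(\psi^\varepsilon-1)_+$, yields
\begin{equation*}
\mathcal{I}_\varepsilon\le C\Bigl(\int_\Pi|\nabla\psi^\varepsilon_+|^2\,dx\Bigr)^{1/2}+C\varepsilon\,\mathcal{I}_\varepsilon+C=C\,\mathcal{I}_\varepsilon^{1/2}+C\varepsilon\,\mathcal{I}_\varepsilon+C,
\end{equation*}
which is the chain \eqref{2-7}. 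This inequality is self-improving: for $\varepsilon$ small it forces $\mathcal{I}_\varepsilon\le C$. Without this (or some substitute for the $\mu^\varepsilon$ lower bound that does not pass through $\mathcal{I}_\varepsilon$), your argument does not close.
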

\begin{proof}
If we take $\psi_+^\varepsilon\in H^1_0(\Pi)$ as a test function, we obtain
  \begin{equation}\label{2-6}
  \int_\Pi {|\nabla \psi^\varepsilon_+|^2} dx =\int_D \zeta^\varepsilon \psi^\varepsilon dx,
\end{equation}
Set $D_1^\varepsilon=\{x\in D \mid \psi^\varepsilon(x)>1\}$. Then $\zeta^\varepsilon(x)\ge f(1)/\varepsilon^2$ a.e. on $D_1^\varepsilon$ and $|D_1^\varepsilon|\le C\varepsilon^2$. By H\"older's inequality and Sobolev's inequality, we have
\begin{equation}\label{2-7}
\begin{split}
   \int_D \zeta^\varepsilon \psi^\varepsilon dx & \le \int_D \zeta^\varepsilon(\psi^\varepsilon-1)_+dx+\kappa \\
     & \le C\varepsilon^{-2}|D^\varepsilon_1|^\frac{1}{2} \left(\int_D (\psi^\varepsilon-1)_+^2dx\right)^\frac{1}{2}+\kappa \\
     & \le C\varepsilon^{-2}|D^\varepsilon_1|^\frac{1}{2} \left(\int_D \nabla (\psi^\varepsilon-1)_+ dx +\int_D (\psi^\varepsilon-1)_+ dx \right)+\kappa   \\
     & \le C \left(\int_D |\nabla \psi_+|^2 dx\right)^\frac{1}{2}+C\varepsilon \int_D \zeta^\varepsilon \psi^\varepsilon dx+C,
\end{split}
\end{equation}
Combining \eqref{2-6} and \eqref{2-7}, we conclude that $\mathcal{I}_\varepsilon \le C$.
\end{proof}
Let $\mathcal{J_\varepsilon}=\varepsilon^{-2}\int_D J(\varepsilon^2 \zeta^\varepsilon)dx$. By convexity we have (see \cite{Roc})
\begin{equation*}
   J(\varepsilon^2\zeta^\varepsilon)+ F(\psi^\varepsilon) = \varepsilon^2 \zeta^\varepsilon \psi^\varepsilon, \ \ \text{a.e.}\ \text{on}\ D,
\end{equation*}
Hence $\mathcal{J_\varepsilon} \le 2\mathcal{I}_\varepsilon$. In conclusion, we get
\begin{lemma}\label{le5}
  $\mathcal{J_\varepsilon}\le C$.
\end{lemma}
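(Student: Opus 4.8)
The plan is to read off the statement from the pointwise Fenchel--Young identity between the convex pair $(F,J)$, combined with the representation formula \eqref{2-2} for $\zeta^\varepsilon$ and the energy bound of Lemma \ref{le4}. First I would record that $F(s)=\int_0^s f(t)\,dt$ is convex (since $f$ is nondecreasing) and that $J=F^*$ is its Legendre conjugate, so that $F=J^*$. Because $f$ has at most countably many discontinuities and, by the remark preceding Lemma \ref{le2}, every level set of $\mathcal{G}\zeta^\varepsilon-Wx_1$ has measure zero, the identity $\zeta^\varepsilon=\varepsilon^{-2}f(\psi^\varepsilon)$ in \eqref{2-2} is equivalent to $\varepsilon^2\zeta^\varepsilon\in\partial F(\psi^\varepsilon)$, i.e. $\psi^\varepsilon\in\partial J(\varepsilon^2\zeta^\varepsilon)$, for a.e.\ $x\in D$. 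Hence the Fenchel--Young inequality is saturated:
\[
J(\varepsilon^2\zeta^\varepsilon)+F(\psi^\varepsilon)=\varepsilon^2\zeta^\varepsilon\psi^\varepsilon\qquad\text{a.e. on }D.
\]

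Next I would invoke (H1): since $f(t)=0$ for $t\le0$ and $f(t)>0$ for $t>0$, the primitive $F$ is nonnegative on all of $\mathbb{R}$. Dropping the nonnegative term $F(\psi^\varepsilon)$ in the identity above gives $J(\varepsilon^2\zeta^\varepsilon)\le\varepsilon^2\zeta^\varepsilon\psi^\varepsilon$ a.e.\ on $D$; integrating over $D$ and dividing by $\varepsilon^2$ yields
\[
\mathcal{J}_\varepsilon=\frac{1}{\varepsilon^2}\int_D J(\varepsilon^2\zeta^\varepsilon)\,dx\le\int_D\zeta^\varepsilon\psi^\varepsilon\,dx=\mathcal{I}_\varepsilon\ (\le 2\mathcal{I}_\varepsilon).
\]
Finally, Lemma \ref{le4} gives $\mathcal{I}_\varepsilon\le C$ with $C$ independent of $\varepsilon$, so $\mathcal{J}_\varepsilon\le C$, as claimed.

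There is essentially no obstacle; the only points that deserve a line of justification are (i) that the Fenchel--Young inequality is actually an equality at $(\varepsilon^2\zeta^\varepsilon,\psi^\varepsilon)$ — this is precisely the fact that $\partial J$ and $f$ are inverse graphs, already used in the proof of Lemma \ref{le2}, with the measure-zero level sets of $\mathcal{G}\zeta^\varepsilon-Wx_1$ absorbing any ambiguity coming from jumps of $f$ — and (ii) that $\mathcal{I}_\varepsilon\ge0$, since wherever $\psi^\varepsilon<0$ one has $\zeta^\varepsilon=0$ by (H1) and elsewhere $\zeta^\varepsilon\psi^\varepsilon\ge0$, so the chain of inequalities is not vacuous. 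For Theorem \ref{thm2} the same argument applies verbatim once the analogue of \eqref{2-2} is available, since only the sign of $F$ and the bound on $\mathcal{I}_\varepsilon$ are used.
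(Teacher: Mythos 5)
Your argument is correct and is essentially the paper's own: the Fenchel--Young identity $J(\varepsilon^2\zeta^\varepsilon)+F(\psi^\varepsilon)=\varepsilon^2\zeta^\varepsilon\psi^\varepsilon$ a.e.\ on $D$, nonnegativity of $F$, and the bound $\mathcal{I}_\varepsilon\le C$ from Lemma \ref{le4}. You merely spell out the justification of the equality case and the sign of $\mathcal{I}_\varepsilon$ in more detail than the paper does (and obtain the slightly sharper $\mathcal{J}_\varepsilon\le\mathcal{I}_\varepsilon$ in place of $\mathcal{J}_\varepsilon\le 2\mathcal{I}_\varepsilon$), which changes nothing essential.
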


Now we turn to estimate the Lagrange multiplier $\mu^{\varepsilon}$.
\begin{lemma}\label{le6}
There holds
\begin{equation*}
  \mu^\varepsilon \ge \frac{2\mathcal{E}_\varepsilon}{\kappa}-C.
\end{equation*}
Consequently,
\begin{equation*}
  \mu^\varepsilon\ge \frac{\kappa}{2\pi}\ln{\frac{1}{\varepsilon}}-C.
\end{equation*}
\end{lemma}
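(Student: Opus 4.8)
The plan is to obtain the lower bound on $\mu^\varepsilon$ by pairing the Euler--Lagrange equation \eqref{2-2} against a well-chosen function and then bootstrapping with the energy estimates already established. First I would start from the defining relation
\[
\psi^\varepsilon=\mathcal{G}\zeta^\varepsilon-Wx_1-\mu^\varepsilon,
\]
multiply by $\zeta^\varepsilon$ and integrate over $D$. Since $\int_D\zeta^\varepsilon\,dx=\kappa$ (Lemma \ref{le2}, using that $\zeta^\varepsilon\not\equiv0$, which follows from the energy lower bound of Lemma \ref{le3}), this gives
\[
\mathcal{I}_\varepsilon=\int_D\zeta^\varepsilon\mathcal{G}\zeta^\varepsilon\,dx-W\int_D x_1\zeta^\varepsilon\,dx-\kappa\mu^\varepsilon.
\]
Comparing this with the definition of $\mathcal{E}_\varepsilon(\zeta^\varepsilon)$, namely $\mathcal{E}_\varepsilon(\zeta^\varepsilon)=\tfrac12\int_D\zeta^\varepsilon\mathcal{G}\zeta^\varepsilon\,dx-W\int_D x_1\zeta^\varepsilon\,dx-\mathcal{J}_\varepsilon$, I can eliminate the quadratic term: $\int_D\zeta^\varepsilon\mathcal{G}\zeta^\varepsilon\,dx=2\mathcal{E}_\varepsilon(\zeta^\varepsilon)+2\mathcal{J}_\varepsilon+2W\int_D x_1\zeta^\varepsilon\,dx$. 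Substituting back yields
\[
\kappa\mu^\varepsilon=2\mathcal{E}_\varepsilon(\zeta^\varepsilon)+2\mathcal{J}_\varepsilon+W\int_D x_1\zeta^\varepsilon\,dx-\mathcal{I}_\varepsilon.
\]

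Now each of the correction terms is controlled. The term $\mathcal{J}_\varepsilon$ is bounded by Lemma \ref{le5}; the term $\mathcal{I}_\varepsilon$ is bounded by Lemma \ref{le4} (and is nonnegative, so it can simply be dropped in the lower bound, or crudely bounded by $|\mathcal I_\varepsilon|\le C$); and $W\int_D x_1\zeta^\varepsilon\,dx\le 2rW\int_D\zeta^\varepsilon\,dx\le 2rW\kappa=C$ since $D\subset\{x_1<2r\}$. Hence $\kappa\mu^\varepsilon\ge 2\mathcal{E}_\varepsilon(\zeta^\varepsilon)-C$, which is the first asserted inequality after dividing by $\kappa$. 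The second inequality is then immediate: apply the energy lower bound of Lemma \ref{le3}, $\mathcal{E}_\varepsilon(\zeta^\varepsilon)\ge\tfrac{\kappa^2}{4\pi}\log\tfrac1\varepsilon-C$, to get $\mu^\varepsilon\ge\tfrac{2}{\kappa}\cdot\tfrac{\kappa^2}{4\pi}\log\tfrac1\varepsilon-C=\tfrac{\kappa}{2\pi}\log\tfrac1\varepsilon-C$.

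The only real subtlety, and the step I would be most careful about, is making sure the bookkeeping of the $\mathcal{J}_\varepsilon$ term is correct: the Legendre duality identity $J(\varepsilon^2\zeta^\varepsilon)+F(\psi^\varepsilon)=\varepsilon^2\zeta^\varepsilon\psi^\varepsilon$ must be used consistently, so that the algebraic relation between $\mathcal{E}_\varepsilon$, $\mathcal{I}_\varepsilon$, and $\mathcal{J}_\varepsilon$ has the right signs. Everything else is a direct substitution combined with the already-proven uniform bounds, so no genuinely new estimate is needed; the lemma is essentially a rearrangement of the identities plus Lemmas \ref{le3}--\ref{le5}.
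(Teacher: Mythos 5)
Your computation is, up to rearrangement, exactly the paper's: expand $\int_D\zeta^\varepsilon\mathcal{G}\zeta^\varepsilon\,dx$ via $\mathcal{G}\zeta^\varepsilon=\psi^\varepsilon+Wx_1+\mu^\varepsilon$, compare with the definition of $\mathcal{E}_\varepsilon$, and control the remainders by Lemmas \ref{le4} and \ref{le5} (plus positivity of $x_1$ on $D$), then insert Lemma \ref{le3}. However, there is one genuine logical flaw in your write-up: you invoke $\int_D\zeta^\varepsilon\,dx=\kappa$ and attribute it to Lemma \ref{le2} with the justification that $\zeta^\varepsilon\not\equiv0$. That is not what Lemma \ref{le2} gives you: the identity $\int_D\zeta^\varepsilon\,dx=\kappa$ holds there only under the \emph{additional} hypothesis that every Lagrange multiplier satisfying \eqref{2-2} is positive, and in the paper that positivity is deduced precisely from the conclusion of Lemma \ref{le6} (this is how Lemma \ref{kappa} is obtained, after Lemma \ref{le6}). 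As written, your argument is therefore circular.

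The fix is immediate and is what the paper does: you do not need the equality. Since $\mu^\varepsilon\ge0$ (guaranteed by Lemma \ref{le2}) and $\int_D\zeta^\varepsilon\,dx\le\kappa$ (from membership in $\mathcal{A}$), you have $\mu^\varepsilon\int_D\zeta^\varepsilon\,dx\le\kappa\mu^\varepsilon$, and the inequality
\begin{equation*}
2\mathcal{E}_\varepsilon(\zeta^\varepsilon)=\mathcal{I}_\varepsilon-W\int_Dx_1\zeta^\varepsilon\,dx-2\mathcal{J}_\varepsilon+\mu^\varepsilon\int_D\zeta^\varepsilon\,dx\le C+\kappa\mu^\varepsilon
\end{equation*}
follows from $\mathcal{I}_\varepsilon\le C$, $\mathcal{J}_\varepsilon\ge0$ and $x_1>0$ on $D$. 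One further small caution: in your identity the vortex-core energy enters as $-\mathcal{I}_\varepsilon$, so its nonnegativity works against you and cannot be ``dropped''; what you actually need is the upper bound $\mathcal{I}_\varepsilon\le C$ of Lemma \ref{le4}, which you do cite. With these two corrections your proof coincides with the paper's.
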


\begin{proof}
  We have
  \begin{equation*}
  \begin{split}
     2\mathcal{E}_\varepsilon(\zeta^\varepsilon) & =\int_D \zeta^\varepsilon\mathcal{G}\zeta^\varepsilon dx-2W\int_D x_1 \zeta^\varepsilon dx-2\mathcal{J}_\varepsilon \\
       & = \int_D \zeta^\varepsilon \psi^\varepsilon dx-W\int_D x_1 \zeta^\varepsilon dx-2\mathcal{J}_\varepsilon+\mu^\varepsilon \int_D \zeta^\varepsilon dx     \\
       & \le C+\kappa \mu^\varepsilon,
  \end{split}
  \end{equation*}
  which implies the desired result.
\end{proof}

By Lemmas \ref{le2}, \ref{le3} and \ref{le6}, we get
\begin{lemma}\label{kappa}
  If $\varepsilon$ is sufficiently small, then $\int_D \zeta^\varepsilon dx=\kappa$.
\end{lemma}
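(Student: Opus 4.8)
The plan is to deduce the statement from the three preceding lemmas together with the second half of Lemma~\ref{le2}. Recall that Lemma~\ref{le2} already guarantees $\int_D\zeta^\varepsilon\,dx=\kappa$ as soon as (i) $\zeta^\varepsilon\not\equiv0$ and (ii) every Lagrange multiplier $\mu^\varepsilon$ satisfying \eqref{2-2} is strictly positive; so it suffices to check (i) and (ii) for all sufficiently small $\varepsilon$.

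For (i) I would argue by contradiction. If $\zeta^\varepsilon\equiv0$, then $\mathcal{E}_\varepsilon(\zeta^\varepsilon)=\mathcal{E}_\varepsilon(0)=0$: indeed $\mathcal{G}0=0$, and since $f\ge0$ with $f\equiv0$ on $(-\infty,0]$ by (H1), the primitive $F$ is nonnegative with $F(0)=0$, whence $J(0)=\sup_{t}(-F(t))=0$. But Lemma~\ref{le3} gives $\mathcal{E}_\varepsilon(\zeta^\varepsilon)\ge\frac{\kappa^2}{4\pi}\log\frac1\varepsilon-C$, which is strictly positive once $\varepsilon$ is below some threshold. This contradiction forces $\zeta^\varepsilon\not\equiv0$ for small $\varepsilon$.

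For (ii) I would invoke Lemma~\ref{le6}. Its proof establishes $\mu^\varepsilon\ge\frac{2\mathcal{E}_\varepsilon(\zeta^\varepsilon)}{\kappa}-C$ using only the identity $\zeta^\varepsilon=\varepsilon^{-2}f(\psi^\varepsilon)$ on $D$, the definition $\psi^\varepsilon=\mathcal{G}\zeta^\varepsilon-Wx_1-\mu^\varepsilon$, the bound $\mathcal{I}_\varepsilon\le C$ of Lemma~\ref{le4}, the sign conditions $\mathcal{J}_\varepsilon\ge0$ and $W\int_Dx_1\zeta^\varepsilon\,dx\ge0$, the constraint $\int_D\zeta^\varepsilon\,dx\le\kappa$, and $\mu^\varepsilon\ge0$ from Lemma~\ref{le2}; none of this singles out a particular multiplier, so the inequality holds for every $\mu^\varepsilon$ realizing \eqref{2-2}. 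Combining it with Lemma~\ref{le3} once more gives $\mu^\varepsilon\ge\frac{\kappa}{2\pi}\log\frac1\varepsilon-C$, positive for $\varepsilon$ small. Hence every admissible $\mu^\varepsilon$ is positive, and Lemma~\ref{le2} closes the argument.

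The proof is essentially bookkeeping: the genuine analytic content lies in the energy lower bound (Lemma~\ref{le3}) and the core-energy estimate (Lemma~\ref{le4}), which are already in hand. The only point that deserves a moment's care is reading the conclusion of Lemma~\ref{le6} with the quantifier ``for every $\mu^\varepsilon$ satisfying \eqref{2-2}'', since that is exactly the hypothesis Lemma~\ref{le2} needs; beyond this I do not anticipate any real obstacle.
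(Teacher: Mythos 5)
Your proposal is correct and follows exactly the route the paper intends: the paper's "proof" is the single line "By Lemmas \ref{le2}, \ref{le3} and \ref{le6}," and you have simply spelled out the bookkeeping — Lemma \ref{le3} rules out $\zeta^\varepsilon\equiv0$ since $\mathcal{E}_\varepsilon(0)=0$, and Lemma \ref{le6} makes every admissible $\mu^\varepsilon$ positive for small $\varepsilon$, so the second assertion of Lemma \ref{le2} applies. Your attention to the quantifier "every $\mu^\varepsilon$ satisfying \eqref{2-2}" is a worthwhile clarification that the paper leaves implicit.
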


Now we turn to estimate the size of the supports of $\zeta^\varepsilon$ as $\varepsilon \to 0$. To
this end, we first recall an auxiliary lemma.
\begin{lemma}[\cite{Cao2}, Lemma 2.8]\label{le7}
  Let $\Omega\subset D$, $0<\varepsilon<1$, $\eta\ge0$, and let non-negative $\xi\in L^1(D)$, $\int_D \xi(x)dx=1$ and $||\xi||_{L^p(D)}\le C_1 \varepsilon^{-2(1-{1}/{p})}$ for some $1<p\le +\infty$ and $C_1>0$. Suppose for any $x\in \Omega$, there holds
  \begin{equation}\label{2-8}
    (1-\eta)\log\frac{1}{\varepsilon}\le \int_D \log\frac{1}{|x-y|}\xi(y)dy+C_2,
  \end{equation}
  where $C_2$ is a positive constant.
Then there exists some constant $R>1$ such that
\begin{equation*}
  diam(\Omega)\le R\varepsilon^{1-2\eta}.
\end{equation*}
The constant $R$ may depend on $C_1$, $C_2$, but not on $\eta$, $\varepsilon$.
\end{lemma}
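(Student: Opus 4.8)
The plan is to rephrase \eqref{2-8} as a lower bound on the logarithmic potential $h(x):=\int_D\log\frac1{|x-y|}\xi(y)\,dy$, namely $h(x)\ge(1-\eta)\log\frac1\varepsilon-C_2$ for $x\in\Omega$, and then to play this bound against itself at two arbitrary points of $\Omega$. It suffices to exhibit $R=R(C_1,C_2,p)>1$ with $|x_1-x_2|\le R\varepsilon^{1-2\eta}$ whenever $x_1,x_2\in\Omega$; so I fix such a pair, set $d:=|x_1-x_2|$ (assuming $d>0$), and let $m_i:=\int_{B_{d/2}(x_i)}\xi$ for $i=1,2$. Because $B_{d/2}(x_1)$ and $B_{d/2}(x_2)$ are disjoint, $m_1+m_2\le\int_D\xi=1$. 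Writing $h(x_i)$ as the sum of its integrals over $B_{d/2}(x_i)$ and over $D\setminus B_{d/2}(x_i)$, the outer piece causes no trouble: there $\log\frac1{|x_i-y|}\le\log\frac2d$, so it is at most $(1-m_i)\log\frac2d$.

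The crux — and the one step I expect to require real care — is the near-field estimate
\[
\int_{B_\rho(z)}\log\tfrac1{|z-y|}\,\xi(y)\,dy\ \le\ \Big(\int_{B_\rho(z)}\xi\Big)\,\log\tfrac1\varepsilon\ +\ C,\qquad C=C(C_1,p),
\]
for every $z\in\mathbb R^2$, $\rho>0$, $\varepsilon\in(0,1)$, with the coefficient of $\log\frac1\varepsilon$ being \emph{precisely} the mass contained in the ball. I would prove it via the layer-cake representation: setting $\nu:=\xi\,\mathbf{1}_{B_\rho(z)}\,dy$ and $m:=\nu(\mathbb R^2)$,
\[
\int_{B_\rho(z)}\log\tfrac1{|z-y|}\,\xi\,dy\ \le\ \int\Big(\log\tfrac1{|z-y|}\Big)_+d\nu\ =\ \int_0^\infty\nu\big(B_{e^{-t}}(z)\big)\,dt,
\]
and then controlling $\nu(B_s(z))$ by $\min\{\,m,\ \pi^{1/p'}C_1(s/\varepsilon)^{2/p'}\,\}$ — the first term by the total mass of $\nu$, the second by Hölder's inequality combined with $\|\xi\|_{L^p}\le C_1\varepsilon^{-2(1-1/p)}$ (here $p'=p/(p-1)$, so $2/p'=2(1-1/p)$). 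Since the two terms in the minimum balance at scale $s\sim\varepsilon$, the resulting one-dimensional integral evaluates to $m\log\frac1\varepsilon+O(1)$. The essential observation is that a crude Hölder estimate applied to the whole near-field integral would produce a constant \emph{multiple} of $\log\frac1\varepsilon$ out front, which is too lossy for what follows; it is exactly through this rearrangement-type splitting that the $L^p$ control on $\xi$ — which quantifies the statement that $\xi$ cannot concentrate below the scale $\varepsilon$ — yields the sharp constant.

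With this in hand the conclusion follows by arithmetic. Applying the near-field estimate with $z=x_i$, $\rho=d/2$, then the outer bound and \eqref{2-8},
\[
(1-\eta)\log\tfrac1\varepsilon-C_2\ \le\ h(x_i)\ \le\ m_i\log\tfrac1\varepsilon+(1-m_i)\log\tfrac2d+C,\qquad i=1,2,
\]
hence $(1-\eta-m_i)\log\frac1\varepsilon\le(1-m_i)\log\frac2d+C'$ with $C':=C+C_2$. Adding the two inequalities and putting $s:=m_1+m_2\in[0,1]$ gives $(2-2\eta-s)\log\frac1\varepsilon\le(2-s)\log\frac2d+2C'$. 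Dividing by $2-s\ge1$ and using both $\log\frac1\varepsilon\ge0$ and the identity $\frac{2-2\eta-s}{2-s}-(1-2\eta)=\frac{2\eta(1-s)}{2-s}\ge0$, we arrive at $\log\frac2d\ge(1-2\eta)\log\frac1\varepsilon-2C'$, that is,
\[
d\ \le\ 2e^{2C'}\,\varepsilon^{1-2\eta}.
\]
Taking the supremum over $x_1,x_2\in\Omega$ and setting $R:=2e^{2(C+C_2)}>1$ — a constant depending on $C_1,C_2,p$ but not on $\eta$ or $\varepsilon$ — completes the proof.
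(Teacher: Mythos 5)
The paper does not prove this lemma itself; it is quoted verbatim from \cite{Cao2} (Lemma 2.8), whose proof is the standard two-point argument for such concentration lemmas. Your proposal is correct and follows essentially that same route: split the logarithmic potential at two points of $\Omega$ into near and far fields, use the layer-cake plus H\"older bound $\nu(B_s(z))\le\min\{m,\pi^{1/p'}C_1(s/\varepsilon)^{2/p'}\}$ to get the sharp coefficient $m$ in front of $\log\frac{1}{\varepsilon}$, and add the two resulting inequalities. The only cosmetic remark is that your $R$ also depends on $p$ (through the near-field constant $C(C_1,p)$), which is harmless here since the lemma is applied with $p=+\infty$.
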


Note that
\begin{equation*}
  G(x,y)\le \frac{1}{2\pi}\log\frac{1}{|x-y|}+C\ \ \  \text{in}\ D\times D,
\end{equation*}
and
\begin{equation*}
  \text{supp}(\zeta^\varepsilon)\subseteq \{x\in D\mid \mathcal{G}\zeta^\varepsilon(x)-Wx_1\ge \mu^\varepsilon\}.
\end{equation*}

Using Lemmas \ref{le6} and \ref{le7}, we immediately get the following estimate.
\begin{lemma}\label{le8}
Let $\zeta^{\varepsilon}$ be a maximizer as in Lemma \ref{le1}, then for $\varepsilon$ small it holds $\text{supp}(\zeta^\varepsilon)\le C\varepsilon$.
\end{lemma}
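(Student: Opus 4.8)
The plan is to apply the abstract diameter-control Lemma~\ref{le7} with the measure $\xi = \zeta^\varepsilon/\kappa$, which by Lemma~\ref{kappa} is a probability density on $D$ for $\varepsilon$ small. First I would record the two structural inputs the abstract lemma requires. The $L^p$ bound: from \eqref{2-1} we have $\varepsilon^2\zeta^\varepsilon \le \sup f$ pointwise, so $\|\xi\|_{L^\infty(D)} = \kappa^{-1}\|\zeta^\varepsilon\|_{L^\infty} \le C\varepsilon^{-2}$, which is exactly the hypothesis $\|\xi\|_{L^p}\le C_1\varepsilon^{-2(1-1/p)}$ in the borderline case $p=+\infty$. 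The pointwise lower-bound \eqref{2-8}: for $x\in\mathrm{supp}(\zeta^\varepsilon)$ we have $\psi^\varepsilon(x)\ge 0$, i.e. $\mathcal{G}\zeta^\varepsilon(x) - Wx_1 \ge \mu^\varepsilon$, hence $\mathcal{G}\zeta^\varepsilon(x)\ge \mu^\varepsilon$ since $x_1>0$ and $W>0$ (here I use $\mu^\varepsilon\ge 0$ from Lemma~\ref{le2}). Using the stated bound $G(x,y)\le \frac{1}{2\pi}\log\frac{1}{|x-y|}+C$ on $D\times D$, this gives $\frac{1}{2\pi}\int_D\log\frac{1}{|x-y|}\zeta^\varepsilon(y)\,dy + C\kappa \ge \mu^\varepsilon$, and dividing by $\kappa/(2\pi)$ and rewriting in terms of $\xi$,
\[
\int_D \log\frac{1}{|x-y|}\xi(y)\,dy + 2\pi C \ge \frac{2\pi}{\kappa}\mu^\varepsilon.
\]

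Next I would feed in the lower bound on the Lagrange multiplier from Lemma~\ref{le6}, namely $\mu^\varepsilon \ge \frac{\kappa}{2\pi}\log\frac{1}{\varepsilon} - C$. Substituting,
\[
\int_D \log\frac{1}{|x-y|}\xi(y)\,dy + C_2 \ge \log\frac{1}{\varepsilon}
\]
for a suitable constant $C_2$ (absorbing $2\pi C$ and the $\frac{2\pi}{\kappa}\cdot C$ term), which is precisely \eqref{2-8} with $\eta = 0$ and $\Omega = \mathrm{supp}(\zeta^\varepsilon)$. Lemma~\ref{le7} then yields $\mathrm{diam}(\mathrm{supp}(\zeta^\varepsilon))\le R\varepsilon$ for a constant $R>1$ independent of $\varepsilon$, which is the assertion (with $C=R$, interpreting ``$\mathrm{supp}(\zeta^\varepsilon)\le C\varepsilon$'' as a diameter bound).

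The argument is essentially a verification exercise once the ingredients are assembled, so there is no deep obstacle; the only point requiring a little care is the interface between the hypotheses of Lemma~\ref{le7} and the quantities at hand — specifically making sure the normalization $\int_D\xi = 1$ is legitimate (this is where Lemma~\ref{kappa} is needed, and hence why the conclusion holds only for $\varepsilon$ small), and confirming that the $p=+\infty$ endpoint of Lemma~\ref{le7} is available, with $C_1$ determined by $\sup f$ and $\kappa$. One should also note for completeness that the case $\zeta^\varepsilon\equiv 0$ does not arise for small $\varepsilon$, again by Lemma~\ref{kappa} (or directly from the energy lower bound in Lemma~\ref{le3}), so $\mathrm{supp}(\zeta^\varepsilon)$ is nonempty and the diameter statement is meaningful. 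I would present the proof in three short displays corresponding to the three steps above.
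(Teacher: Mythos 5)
Your proposal is correct and follows essentially the same route as the paper: the paper's proof consists precisely of noting the Green's function bound $G(x,y)\le \frac{1}{2\pi}\log\frac{1}{|x-y|}+C$ and the inclusion $\mathrm{supp}(\zeta^\varepsilon)\subseteq\{x\in D \mid \mathcal{G}\zeta^\varepsilon(x)-Wx_1\ge\mu^\varepsilon\}$, and then invoking Lemmas \ref{le6} and \ref{le7} exactly as you do. Your write-up is in fact more careful than the paper's, since you make explicit the normalization via Lemma \ref{kappa}, the $p=+\infty$ case of the $L^p$ hypothesis from \eqref{2-1}, and the choice $\eta=0$.
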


We have shown that the vorticies $\{\zeta^\varepsilon\}$ would shrink to some point in $\overline{D}$ when $\varepsilon \to 0$. Now we investigate this limiting location. Let
\begin{equation*}
  \mathcal{W}(t)=\frac{\kappa^2}{4\pi}\log\frac{1}{2t}+{\kappa Wt},\ \ t>0.
\end{equation*}
It is easy to see that $\mathcal{W}(r)=\min_{t>0} \mathcal{W}(t)$.

The following lemma show that the limiting location is $b_1$.
\begin{lemma}\label{le9}
  One has
  \begin{equation*}
    dist\left(supp(\zeta^\varepsilon), b_1\right)\to 0, \ \ \text{as}\ \varepsilon \to 0
  \end{equation*}
\end{lemma}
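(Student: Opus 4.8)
The plan is to combine the energy lower bound of Lemma \ref{le3} with the smallness of the support from Lemma \ref{le8} to pin down the limiting location via the function $\mathcal{W}$. First I would fix a point $x_\varepsilon=(a_\varepsilon,0)$ in the support of $\zeta^\varepsilon$ (say, a point realizing the Steiner-symmetrized profile's center, or simply any point of $\mathrm{supp}(\zeta^\varepsilon)$; by Lemma \ref{le8} the whole support lies within $C\varepsilon$ of it). Up to a subsequence $a_\varepsilon\to a_0\in[r/2,2r]$, and the goal is to show $a_0=r$, i.e. $x_\varepsilon\to b_1$. Writing out $\mathcal{E}_\varepsilon(\zeta^\varepsilon)$ and using $\int_D\zeta^\varepsilon\,dx=\kappa$ (Lemma \ref{kappa}) together with $\mathcal{J}_\varepsilon\le C$ (Lemma \ref{le5}), I get
\begin{equation*}
  \mathcal{E}_\varepsilon(\zeta^\varepsilon)\le \frac12\int_D\zeta^\varepsilon\mathcal{G}\zeta^\varepsilon\,dx - W\int_D x_1\zeta^\varepsilon\,dx + C.
\end{equation*}
The term $W\int_D x_1\zeta^\varepsilon\,dx = \kappa W a_\varepsilon + O(\varepsilon)$ by the support estimate.

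The main point is the upper bound on the self-interaction $\frac12\int_D\zeta^\varepsilon\mathcal{G}\zeta^\varepsilon\,dx$. Since $G(x,y)\le \frac{1}{2\pi}\log\frac{1}{|x-y|}+C$ on $D\times D$ and, more precisely, $G(x,y)=\frac{1}{2\pi}\log\frac{|\bar x-y|}{|x-y|}$, one has for $x,y$ both within $C\varepsilon$ of $x_\varepsilon$ that $|\bar x - y| = 2a_\varepsilon + O(\varepsilon)$, hence
\begin{equation*}
  G(x,y)\le \frac{1}{2\pi}\log\frac{1}{|x-y|} + \frac{1}{2\pi}\log(2a_\varepsilon) + O(\varepsilon/a_\varepsilon).
\end{equation*}
Plugging this in and using the elementary bound $\int_D\int_D \log\frac{1}{|x-y|}\zeta^\varepsilon(x)\zeta^\varepsilon(y)\,dx\,dy \le \kappa^2\log\frac{1}{\varepsilon}+C$ (which follows by rearrangement, since $\varepsilon^2\zeta^\varepsilon$ is bounded so $\zeta^\varepsilon$ is dominated in the rearrangement order by a multiple of the characteristic function of a ball of radius $\sim\varepsilon$), I obtain
\begin{equation*}
  \mathcal{E}_\varepsilon(\zeta^\varepsilon)\le \frac{\kappa^2}{4\pi}\log\frac{1}{\varepsilon} + \frac{\kappa^2}{4\pi}\log(2a_\varepsilon) - \kappa W a_\varepsilon + C = \frac{\kappa^2}{4\pi}\log\frac{1}{\varepsilon} - \mathcal{W}(a_\varepsilon) + C,
\end{equation*}
using the definition $\mathcal{W}(t)=\frac{\kappa^2}{4\pi}\log\frac{1}{2t}+\kappa W t$. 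Comparing with Lemma \ref{le3}, $\mathcal{E}_\varepsilon(\zeta^\varepsilon)\ge \frac{\kappa^2}{4\pi}\log\frac{1}{\varepsilon}-C$, yields $\mathcal{W}(a_\varepsilon)\le C$ uniformly — but to get the exact location I need the sharper statement. For that I would instead run the test-function construction of Lemma \ref{le3} with the ball centered at an arbitrary $x_0=(a,0)\in D$ and track the $a$-dependent constant: this gives $\mathcal{E}_\varepsilon(\zeta^\varepsilon)\ge \frac{\kappa^2}{4\pi}\log\frac{1}{\varepsilon} - \mathcal{W}(a) - C'$ for every admissible $a$, in particular for $a=r$, giving $\mathcal{E}_\varepsilon(\zeta^\varepsilon)\ge \frac{\kappa^2}{4\pi}\log\frac{1}{\varepsilon}-\mathcal{W}(r)-C'$.

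Combining the two inequalities gives $\mathcal{W}(a_\varepsilon)\le \mathcal{W}(r)+O(1)$, which only bounds $a_\varepsilon$ away from $0$ and $\infty$ — not yet enough. The resolution is that the $O(1)$ errors must actually be shown to vanish: the self-interaction upper bound can be sharpened to $\frac{\kappa^2}{4\pi}\log\frac1\varepsilon + \frac{\kappa^2}{4\pi}\log(2a_\varepsilon) + o(1)$ and the test-function lower bound to $\frac{\kappa^2}{4\pi}\log\frac1\varepsilon - \mathcal{W}(r) + o(1)$ as $\varepsilon\to0$, because the only genuinely $\varepsilon$-divergent contribution is the logarithmic self-energy, whose coefficient $\kappa^2/(4\pi)$ matches on both sides. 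This forces $\mathcal{W}(a_\varepsilon)\to \min_{t>0}\mathcal{W}(t)=\mathcal{W}(r)$, and since $\mathcal{W}$ is strictly convex with unique minimizer at $t=r$, we conclude $a_\varepsilon\to r$, hence $\mathrm{dist}(\mathrm{supp}(\zeta^\varepsilon),b_1)\to 0$. The main obstacle, and the step requiring the most care, is precisely this matching of constants: showing that after extracting the leading $\frac{\kappa^2}{4\pi}\log\frac1\varepsilon$ term, the remaining finite parts of the energy upper and lower bounds converge to $-\mathcal{W}(a_0)$ and $-\mathcal{W}(r)$ respectively with no unaccounted $O(1)$ slack — this is where the boundedness of $\mathcal{J}_\varepsilon$, the sharp support estimate $\mathrm{diam}\,\mathrm{supp}(\zeta^\varepsilon)\le C\varepsilon$, and a careful rearrangement estimate for $\int\int\log\frac{1}{|x-y|}\zeta^\varepsilon\zeta^\varepsilon$ all have to be used quantitatively rather than merely qualitatively.
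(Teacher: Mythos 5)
Your overall strategy (locating the limit point by comparing energy upper and lower bounds through $\mathcal{W}$) is in the right spirit, but the argument has a genuine gap at exactly the step you yourself flag as ``the main obstacle'': you need the finite parts of the upper and lower bounds to match up to $o(1)$, and nothing you write delivers that. The rearrangement bound gives only $\int_D\int_D\log\frac{1}{|x-y|}\,\zeta^\varepsilon(x)\zeta^\varepsilon(y)\,dx\,dy\le \kappa^2\log\frac{1}{\varepsilon}+C$ with a constant $C$ that is genuinely $O(1)$, not $o(1)$; likewise the test-function lower bound carries its own $O(1)$ constant (the self-energy of the particular patch you insert, plus the bound on $J$ for it). Since the maximizer's profile is not known to coincide with that patch, there is no reason these two constants agree, and the observation that the coefficient of $\log\frac{1}{\varepsilon}$ matches on both sides does not force the constant terms to match. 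You therefore only obtain $\mathcal{W}(a_\varepsilon)\le\mathcal{W}(r)+O(1)$, which, as you concede, is not enough; the asserted sharpening to $o(1)$ errors is precisely what is unproved.

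The paper avoids this issue entirely with a translation trick: it compares $\zeta^\varepsilon$ with its own translate $\tilde\zeta^\varepsilon=\zeta^\varepsilon(\cdot-\bar r\mathbf{e}_1+r\mathbf{e}_1)\in\mathcal{A}$. Translation preserves \emph{exactly} both the self-interaction $\int\int\log\frac{1}{|x-x'|}\zeta\zeta$ and the penalization $\int_D J(\varepsilon^2\zeta)\,dx$, so in the inequality $\mathcal{E}_\varepsilon(\tilde\zeta^\varepsilon)\le\mathcal{E}_\varepsilon(\zeta^\varepsilon)$ the divergent and profile-dependent terms cancel identically, leaving only the image interaction $\int\int\log\frac{1}{|\bar x-x'|}\zeta\zeta$ and the impulse term $W\int x_1\zeta$. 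By the support estimate of Lemma \ref{le8} both of these converge to the corresponding terms of $\mathcal{W}$, yielding $\mathcal{W}(\bar r)\le\mathcal{W}(r)$ with no error term, hence $\bar r=r$ since $r$ is the unique minimizer. This cancellation-by-comparison-with-a-translate is the missing idea; without it (or an equally sharp substitute, such as a full blow-up analysis of the rescaled profile) your argument cannot close.
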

\begin{proof}
  Take $(x^\varepsilon_1,0)\in \text{supp}(\zeta^\varepsilon)$. Suppose (up to a subsequence) $x_1^\varepsilon\to \bar{r}$ as $\varepsilon \to 0$. We now show that $\bar{r}=r$. Set
  $\tilde{\zeta}^\varepsilon=\zeta^\varepsilon(\cdot-\bar{r}\mathbf{e}_1+r\mathbf{e}_1)\in \mathcal{A}$. Then
  \begin{equation*}
    \int_D \int_D{\tilde{\zeta}^\varepsilon(x)\log\frac{1}{|x-x'|}\tilde{\zeta}^\varepsilon}(x')dxdx'= \int_D \int_D{{\zeta}^\varepsilon(x)\log\frac{1}{|x-x'|}{\zeta}^\varepsilon}(x')dxdx',
  \end{equation*}
  and
  \begin{equation*}
 \int_D J(\varepsilon^2\tilde{\zeta}^\varepsilon)dx=\int_D J(\varepsilon^2{\zeta}^\varepsilon)dx.
  \end{equation*}
  Since $\mathcal{E}_\varepsilon(\tilde{\zeta}^\varepsilon)\le \mathcal{E}_\varepsilon(\zeta^\varepsilon)$, it follows that
  \begin{equation*}
  \begin{split}
    \frac{1}{4\pi} \int_D \int_D{{\zeta}^\varepsilon(x)\log\frac{1}{|\bar{x}-x'|}{\zeta}^\varepsilon}(x')dxdx'+W\int_{D}x_1{\zeta}^\varepsilon dx  &  \\
        \le \frac{1}{4\pi} \int_D \int_D{\tilde{\zeta}^\varepsilon(x)\log\frac{1}{|\bar{x}-x'|}\tilde{\zeta}^\varepsilon}(x')dxdx'&+W\int_{D}x_1\tilde{\zeta}^\varepsilon dx
  \end{split}
  \end{equation*}
  Letting $\varepsilon \to 0$, we get
  \begin{equation*}
    \mathcal{W}(\bar{r})\le \mathcal{W}(r).
  \end{equation*}
 This implies $\bar{r}=r$ and the proof is thus complete.
\end{proof}

Combining Lemma \ref{le8} and Lemma \ref{le9}, we get
\begin{lemma}\label{le10}
  For all sufficiently small $\varepsilon$, it holds $dist\left(supp(\zeta^\varepsilon), \partial D\right)>0$.
\end{lemma}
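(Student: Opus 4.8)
The plan is to deduce the statement directly from Lemmas~\ref{le8} and \ref{le9}, together with the elementary observation that the limiting point $b_1=r\mathbf{e}_1$ lies in the interior of the rectangle $D$. Since $r/2<r<2r$ and $-1<0<1$, the point $b_1=(r,0)$ is an interior point of $D=\{x\in\Pi\mid r/2<x_1<2r,\ -1<x_2<1\}$. Hence one may fix a radius $\delta_0\in(0,r/4)$ such that $\overline{B_{2\delta_0}(b_1)}\subset D$; for such a $\delta_0$ one has
\[
\text{dist}\big(B_{\delta_0}(b_1),\partial D\big)\ge \delta_0>0 .
\]

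Next I would combine the two asymptotic facts already established. By Lemma~\ref{le9} there is $\varepsilon_1>0$ such that $\text{dist}\big(\text{supp}(\zeta^\varepsilon),b_1\big)<\delta_0/2$ for all $0<\varepsilon<\varepsilon_1$, and by Lemma~\ref{le8} there is $\varepsilon_2>0$ such that $\text{diam}\big(\text{supp}(\zeta^\varepsilon)\big)\le C\varepsilon<\delta_0/2$ for all $0<\varepsilon<\varepsilon_2$. Then for every $\varepsilon<\min\{\varepsilon_1,\varepsilon_2\}$ and every $x\in\text{supp}(\zeta^\varepsilon)$ (which is nonempty once $\zeta^\varepsilon\not\equiv0$, guaranteed for small $\varepsilon$ by Lemma~\ref{kappa}),
\[
|x-b_1|\le \text{dist}\big(\text{supp}(\zeta^\varepsilon),b_1\big)+\text{diam}\big(\text{supp}(\zeta^\varepsilon)\big)<\delta_0 ,
\]
so that $\text{supp}(\zeta^\varepsilon)\subset B_{\delta_0}(b_1)$. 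Consequently
\[
\text{dist}\big(\text{supp}(\zeta^\varepsilon),\partial D\big)\ge \text{dist}\big(B_{\delta_0}(b_1),\partial D\big)\ge\delta_0>0 ,
\]
which is the assertion.

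As for difficulty, there is essentially no obstacle here: the real work — the diameter bound in Lemma~\ref{le8} and the identification of the concentration point in Lemma~\ref{le9} — has already been carried out. The only point requiring (trivial) care is to record that $b_1$ is an interior point of $D$ and to upgrade the Hausdorff-type convergence $\text{supp}(\zeta^\varepsilon)\to\{b_1\}$ into a uniform interior-containment statement valid for all small $\varepsilon$. This lemma will then be used to conclude that the constraint $\text{supp}(\zeta)\subseteq D$ is inactive at the maximizer, so that the Lagrange-multiplier identity \eqref{2-2} propagates to all of $\Pi$ and $\zeta^\varepsilon$ is a genuine solution of \eqref{fr2} with $\lambda=\varepsilon^{-2}$, rather than an artefact of the truncation to $D$.
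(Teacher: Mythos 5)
Your proof is correct and is exactly the argument the paper intends: the paper's entire justification is the single line ``Combining Lemma \ref{le8} and Lemma \ref{le9}, we get...'', and you have simply written out the obvious triangle-inequality details (support has small diameter, some point of it is near the interior point $b_1$, hence the whole support sits in a fixed ball compactly contained in $D$). Nothing further to add.
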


Now we show that $\psi^\varepsilon$ is a solution of \eqref{fr2} (with $\phi^\varepsilon=\mathcal{G\zeta^\varepsilon}$) when $\varepsilon$ is small.
\begin{lemma}\label{le11}
For all sufficiently small $\varepsilon$, one has
\begin{equation*}
  \zeta^{\varepsilon}=\frac{1}{ \varepsilon^2}f\left(\psi^{\varepsilon}\right) \ \ a.e.\  \text{in}\  \Pi.
\end{equation*}
\end{lemma}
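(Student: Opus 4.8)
The plan is to derive the asserted identity on $\Pi\setminus D$ from Lemma \ref{le2}, which already gives $\zeta^\varepsilon=\varepsilon^{-2}f(\psi^\varepsilon)$ a.e.\ in $D$. Since $\mathrm{supp}(\zeta^\varepsilon)\subseteq D$, outside $D$ we have $\zeta^\varepsilon=0$, so it suffices to show that $f(\psi^\varepsilon)\equiv0$ there; by assumption (H1) this amounts to showing $\psi^\varepsilon\le0$ on $\Pi\setminus D$ once $\varepsilon$ is small. Thus the whole matter reduces to an upper bound for $\psi^\varepsilon=\mathcal{G}\zeta^\varepsilon-Wx_1-\mu^\varepsilon$ away from the vortex core.

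First I would fix a small radius $\rho\in(0,\min\{r/2,1\})$ so that $\overline{B_\rho(b_1)}\subset D$. By Lemmas \ref{le8} and \ref{le9}, for all sufficiently small $\varepsilon$ one has $\mathrm{supp}(\zeta^\varepsilon)\subseteq B_\rho(b_1)$; consequently $y_1<2r$ on $\mathrm{supp}(\zeta^\varepsilon)$ and $|x-y|\ge d_0:=\min\{r/2,1\}-\rho>0$ for every $x\in\Pi\setminus D$ and $y\in\mathrm{supp}(\zeta^\varepsilon)$, with $d_0$ independent of $\varepsilon$. From the explicit form of $G$ one has the elementary bound
\[
G(x,y)=\frac{1}{4\pi}\log\!\Big(1+\frac{4x_1y_1}{|x-y|^2}\Big)\le\frac{x_1y_1}{\pi|x-y|^2},\qquad x,y\in\Pi,
\]
so, using $\int_D\zeta^\varepsilon\,dx\le\kappa$, we get $\mathcal{G}\zeta^\varepsilon(x)\le \dfrac{2r\kappa}{\pi}\,x_1\sup_{y\in\mathrm{supp}(\zeta^\varepsilon)}|x-y|^{-2}$ for $x\in\Pi\setminus D$. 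A short case analysis over the pieces $\{x_1\le r/2\}$, $\{x_1\ge 2r\}$, and the slab $\{r/2<x_1<2r,\ |x_2|\ge1\}$ that cover $\Pi\setminus D$ then produces a bound $\mathcal{G}\zeta^\varepsilon(x)\le C_0$, uniform on $\Pi\setminus D$ with $C_0$ independent of $\varepsilon$: on the first and third pieces $x_1$ is bounded by $2r$ while $|x-y|\ge d_0$, and on $\{x_1\ge 2r\}$ one has $|x-y|\ge x_1-y_1\ge x_1-2r$, which upgrades the estimate to $\mathcal{G}\zeta^\varepsilon(x)=O(1/x_1)$ and hence is bounded there as well. (Equivalently, one may invoke the maximum principle: $\psi^\varepsilon$ is harmonic in $\Pi\setminus\overline{B_\rho(b_1)}$, equals $-\mu^\varepsilon$ on the $x_2$-axis, is $\le C-\mu^\varepsilon$ on $\partial B_\rho(b_1)$, and $\mathcal{G}\zeta^\varepsilon(x)\to0$ as $|x|\to\infty$, so $\psi^\varepsilon\le C-\mu^\varepsilon$ on that domain.)

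Finally, combining the uniform bound with $-Wx_1\le0$ on $\Pi$ and the estimate $\mu^\varepsilon\ge\frac{\kappa}{2\pi}\log\frac1\varepsilon-C$ from Lemma \ref{le6}, we obtain $\psi^\varepsilon(x)\le C_0-\mu^\varepsilon<0$ for every $x\in\Pi\setminus D$ as soon as $\varepsilon$ is small enough. By (H1) this forces $\varepsilon^{-2}f(\psi^\varepsilon(x))=0=\zeta^\varepsilon(x)$ for $x\in\Pi\setminus D$, and together with the identity on $D$ from Lemma \ref{le2} we conclude $\zeta^\varepsilon=\varepsilon^{-2}f(\psi^\varepsilon)$ a.e.\ in $\Pi$.

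I expect the only genuinely technical point to be the uniform control of $\mathcal{G}\zeta^\varepsilon$ over the \emph{unbounded} region $\Pi\setminus D$ --- in particular ruling out growth in $x_1$ near the right-hand edge $\{x_1=2r\}$ of $D$. The refined inequality $G(x,y)\le x_1y_1/(\pi|x-y|^2)$ (or, in the alternative argument, the harmonicity of $\psi^\varepsilon$ together with the decay $\mathcal{G}\zeta^\varepsilon\to0$ at infinity) is precisely what removes that difficulty; the rest is routine.
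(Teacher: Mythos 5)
Your proof is correct, and it rests on the same essential reduction as the paper's: by Lemma \ref{le2} one only needs $\psi^\varepsilon\le 0$ on $\Pi\setminus D$ (the paper's phrase ``$\psi^\varepsilon=0$'' there is evidently a typo for ``$\psi^\varepsilon\le 0$''), and this follows from a uniform-in-$\varepsilon$ bound on $\mathcal{G}\zeta^\varepsilon$ away from the vortex core together with $\mu^\varepsilon\ge\frac{\kappa}{2\pi}\log\frac1\varepsilon-C\to+\infty$ from Lemma \ref{le6}. Where you differ is in how the uniform bound is obtained: the paper takes the route you only mention parenthetically, namely the maximum principle on $\Pi\setminus D$ (harmonicity of $\psi^\varepsilon$ there since $\mathrm{supp}(\zeta^\varepsilon)\subset D$, nonpositive boundary data on the $x_2$-axis and on $\partial D$ via Lemma \ref{le10}, and decay at infinity), which only requires controlling $\mathcal{G}\zeta^\varepsilon$ on the compact set $\partial D$ and so sidesteps the unbounded region entirely. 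Your primary argument instead estimates the kernel directly via $G(x,y)=\frac{1}{4\pi}\log\bigl(1+\frac{4x_1y_1}{|x-y|^2}\bigr)\le\frac{x_1y_1}{\pi|x-y|^2}$ and a case analysis over $\Pi\setminus D$; this is more hands-on but avoids invoking a maximum principle in an unbounded domain and makes the decay of $\mathcal{G}\zeta^\varepsilon$ at infinity explicit. One cosmetic slip: on the piece $\{x_1\ge 2r\}$ the lower bound $|x-y|\ge x_1-2r$ degenerates as $x_1\downarrow 2r$; you should instead use $|x-y|\ge x_1-y_1\ge x_1-(r+\rho)\ge r-\rho$ (or simply combine with your uniform bound $|x-y|\ge d_0$), after which the estimate closes exactly as you describe. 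With that adjustment both arguments are complete and rely on the same prerequisite lemmas.
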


\begin{proof}
By Lemma \ref{le2}, it suffices to show that $\psi^{\varepsilon}=0$ a.e. on $\Pi\backslash D$. By Lemma \ref{le10}, we have
\begin{equation*}
\begin{cases}
      -\Delta\psi^{\varepsilon}=0\ \ &\text{in}\  \Pi\backslash D, \\
     ~\psi^{\varepsilon}\le 0\ \ &\text{on}\  \partial \left( \Pi\backslash D\right), \\
     ~\psi^{\varepsilon} \le 0 \ \ &\text{at}\  \infty .
\end{cases}
\end{equation*}
By the maximum principle, we conclude that $\psi^{\varepsilon}\le0$ a.e. on $\Pi\backslash D$. The proof is therefore complete.
\end{proof}

\begin{lemma}\label{lem13}
The following asymptotic expansions hold as $\varepsilon\to 0^+$:
\begin{align}
\label{2-17}  \mathcal{E}_\varepsilon(\zeta^\varepsilon) & =\frac{\kappa^2}{4\pi}\ln\frac{1}{\varepsilon}+O(1), \\
\label{2-18}  \mu^\varepsilon & =\frac{\kappa}{2\pi}\ln\frac{1}{\varepsilon}+O(1).
\end{align}
\end{lemma}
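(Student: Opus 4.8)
The plan is to upgrade the one‑sided estimates already obtained into two‑sided ones. The lower bounds are in hand: $\mathcal{E}_\varepsilon(\zeta^\varepsilon)\ge\frac{\kappa^2}{4\pi}\log\frac1\varepsilon-C$ is Lemma~\ref{le3}, and $\mu^\varepsilon\ge\frac{\kappa}{2\pi}\log\frac1\varepsilon-C$ is the second conclusion of Lemma~\ref{le6}. So the entire task is to produce the matching upper bounds.

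For \eqref{2-17}, first discard the two terms of $\mathcal{E}_\varepsilon$ with a favourable sign: $x_1>0$ on $D$, and $J\ge0$ since $J(s)=\sup_{t}(st-F(t))\ge-F(0)=0$; hence $\mathcal{E}_\varepsilon(\zeta^\varepsilon)\le\frac12\int_D\zeta^\varepsilon\mathcal{G}\zeta^\varepsilon\,dx$. Using the pointwise bound $G(x,y)\le\frac1{2\pi}\log\frac1{|x-y|}+C$ on $D\times D$ together with $\int_D\zeta^\varepsilon\le\kappa$, it suffices to bound $\int_D\log\frac1{|x-y|}\zeta^\varepsilon(y)\,dy$ from above, uniformly in $x\in D$. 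Here I would invoke the bathtub principle (equivalently the Riesz rearrangement inequality): by \eqref{2-1} one has $0\le\zeta^\varepsilon\le M:=\varepsilon^{-2}\sup f$ a.e., and since $\log_+\frac1{|x-\cdot|}$ has ball‑shaped superlevel sets centered at $x$, the integral is maximized by packing the mass $\kappa$ into the ball $B_\rho(x)$ with $\pi\rho^2M=\kappa$, i.e. $\rho=\varepsilon\sqrt{\kappa/(\pi\sup f)}$. For $\varepsilon$ small, $\rho<1$, so on $B_\rho(x)$ the positive part equals the log and a direct computation gives
\[
\int_D\log\frac1{|x-y|}\zeta^\varepsilon(y)\,dy\le M\int_{B_\rho(x)}\log\frac1{|x-y|}\,dy=\kappa\log\frac1\rho+\frac\kappa2=\kappa\log\frac1\varepsilon+O(1).
\]
Substituting back yields $\frac12\int_D\zeta^\varepsilon\mathcal{G}\zeta^\varepsilon\,dx\le\frac{\kappa^2}{4\pi}\log\frac1\varepsilon+O(1)$, which together with Lemma~\ref{le3} proves \eqref{2-17}.

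For \eqref{2-18}, I would reuse the identity from the proof of Lemma~\ref{le6}. Expanding $\int_D\zeta^\varepsilon\mathcal{G}\zeta^\varepsilon\,dx=\mathcal{I}_\varepsilon+W\int_Dx_1\zeta^\varepsilon\,dx+\mu^\varepsilon\int_D\zeta^\varepsilon\,dx$ and using $\int_D\zeta^\varepsilon\,dx=\kappa$ (Lemma~\ref{kappa}) gives
\[
\kappa\mu^\varepsilon=2\mathcal{E}_\varepsilon(\zeta^\varepsilon)-\mathcal{I}_\varepsilon+W\int_Dx_1\zeta^\varepsilon\,dx+2\mathcal{J}_\varepsilon.
\]
Now $\mathcal{I}_\varepsilon=\int_\Pi|\nabla\psi^\varepsilon_+|^2\,dx\ge0$ by \eqref{2-6}, $W\int_Dx_1\zeta^\varepsilon\,dx\le2Wr\kappa=O(1)$, and $\mathcal{J}_\varepsilon\le C$ by Lemma~\ref{le5}; hence $\kappa\mu^\varepsilon\le2\mathcal{E}_\varepsilon(\zeta^\varepsilon)+O(1)\le\frac{\kappa^2}{2\pi}\log\frac1\varepsilon+O(1)$ by \eqref{2-17}, i.e. $\mu^\varepsilon\le\frac{\kappa}{2\pi}\log\frac1\varepsilon+O(1)$. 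Combined with the lower bound of Lemma~\ref{le6}, this gives \eqref{2-18}.

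The only step that is not mere bookkeeping is the energy upper bound: extracting the sharp leading constant $\frac{\kappa^2}{4\pi}$ — rather than a crude $O(\log\frac1\varepsilon)$ — requires using the exact pointwise ceiling \eqref{2-1} together with the extremal (bathtub) configuration. Once that estimate is in place, \eqref{2-18} follows from a short manipulation of the lemmas already established.
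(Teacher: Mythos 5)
Your proposal is correct and follows essentially the same route as the paper: the energy upper bound via the rearrangement/bathtub estimate using the $L^\infty$ ceiling \eqref{2-1} is exactly what the paper's terse appeal to ``Riesz's rearrangement inequality'' amounts to, and your derivation of the upper bound for $\mu^\varepsilon$ is the reverse-direction version of the identity used in Lemma \ref{le6}, which is what the paper means by ``the same argument.'' The only difference is that you supply the details the paper omits.
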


\begin{proof}
  Using Riesz's rearrangement inequality, we obtain
  \begin{equation*}
     \mathcal{E}_\varepsilon(\zeta^\varepsilon)\le \frac{\kappa^2}{4\pi}\ln\frac{1}{\varepsilon}+C.
  \end{equation*}
  Combining this and Lemma \ref{le3}, we get \eqref{2-17}. Using the same argument as in the proof of Lemma \ref{le6}, we can easily get \eqref{2-18}. The proof is complete.
\end{proof}

\begin{remark}
  With the above results in hand, one can further study the shape of the free boundary $\{x\in \Pi\mid \psi^\varepsilon(x)=0\}$ by the standard scaling techniques (see \cite{Cao2, Tur83}). The vortex core will be approximately a disk.
\end{remark}

We are now in a position to prove Theorem \ref{thm1}.
\begin{proof}[Proof of Theorem \ref{thm1}]
It follows from the above lemmas by letting $\lambda=1/\varepsilon^{2}$.
\end{proof}

\subsection{Proof of Theorem \ref{thm2}}
In this subsection, we consider the case that the nonlinearity is unbounded. This situation seems a little more complicated. The key idea is to truncate the function $f$. Let $\rho>1$ and
\begin{equation*}
  f_\rho(s)=f(s)\ \ \text{if}\ s\le \rho;\ \ f_\rho(s)=f(\rho),\ \ \text{if}\ s>\rho.
\end{equation*}
Let
\begin{equation*}
  F_\rho(s)=\int_{0}^{s}f_\rho(t)dt
\end{equation*}
and $J_\rho$ be the conjugate function to $F_\rho$.
\subsubsection{Variational problem}
Let $D=\{x\in \Pi~|~r/2<x_1<2r,\ -1<x_2<1\}$ and
\begin{equation*}
\mathcal{A}=\{\zeta\in L^\infty(\Pi)~|~  \zeta \ge 0~ \text{a.e.}, \int_{\Pi}\zeta dx \le \kappa,~ supp(\zeta)\subseteq D \}.
\end{equation*}
For $\varepsilon>0$ we define $\mathcal{E}_{\varepsilon,\rho}(\zeta)\in [-\infty,+\infty)$ by
\begin{equation*}
  \mathcal{E}_{\varepsilon,\rho}(\zeta)=\frac{1}{2}\int_D{\zeta \mathcal{G}\zeta}dx-\frac{{W}}{2}\int_{D}x_1\zeta dx-\frac{1}{\varepsilon^2}\int_D J_\rho(\varepsilon^2\zeta)dx.
\end{equation*}

Using the same argument as in the proof of Theorem \ref{thm1}, we can obtain a family of solutions $(\zeta^{\varepsilon,\rho},\psi^{\varepsilon,\rho})$ which will shrink to $b_1$ when $\varepsilon$ tends to zero. Unfortunately, $\psi^{\varepsilon, \rho}$ is just a solution of the truncated equation, namely,
\begin{equation}\label{3-1}
   -\Delta \psi= \frac{1}{\varepsilon^2} f_\rho(\psi)\ \ \ \ \text{in}\ \ \Pi.
\end{equation}
If we can select a suitable $\rho$ such that $\psi^{\varepsilon,\rho}\le \rho$ almost everywhere in $\Pi$ when $\varepsilon$ is small, then the truncation of $f$ in \eqref{3-1} can be removed. In other words, the proof of Theorem \ref{thm2} is thus completed. We will demonstrate this in the next subsection.

 \subsubsection{Asymptotic behavior}
Recall that
\begin{equation*}
 \psi^{\varepsilon,\rho}(x)=\mathcal{G}\zeta^{\varepsilon,\rho}(x)-Wx_1-\mu^{\varepsilon,\rho},
\end{equation*}
and
\begin{equation*}
   \mathcal{G}\zeta^{\varepsilon,\rho}(x)=\int_\Pi G(x,y)\zeta^{\varepsilon,\rho}(y)dy=\frac{1}{2\pi} \int_D \log\frac{|\bar{x}-y|}{|x-y|}\zeta^{\varepsilon,\rho}(y)dy.
\end{equation*}
\begin{lemma}\label{le12}
There exists a constant $C>0$ not depending on $\varepsilon$ and $\rho$, such that
\begin{equation*}
   \mathcal{G}\zeta^{\varepsilon,\rho}(x)\le \frac{\kappa}{2\pi}\log\frac{1}{\varepsilon}+\frac{\kappa}{4\pi}\log f(\rho)+C,\ \ \forall~ x\in\Pi,
\end{equation*}
provided $\varepsilon$ is small enough.
\end{lemma}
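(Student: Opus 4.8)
The plan is to bound $\mathcal{G}\zeta^{\varepsilon,\rho}$ pointwise by combining an elementary estimate for the Green's function with a rearrangement (layer–cake) inequality for the logarithmic potential of a nonnegative function with prescribed mass and sup–norm. First recall what the construction (the analogue of Lemmas~\ref{le1} and \ref{le2} for the truncated functional $\mathcal{E}_{\varepsilon,\rho}$) provides: $\zeta^{\varepsilon,\rho}\in\mathcal{A}$, $\zeta^{\varepsilon,\rho}=\varepsilon^{-2}f_\rho(\psi^{\varepsilon,\rho})$ on $D$ and $\equiv0$ outside $D$, and — since the effective domain of $J_\rho$ lies in $(-\infty,\sup f_\rho]$ with $\sup f_\rho=f(\rho)$ because $f$ is nondecreasing — the analogue of \eqref{2-1}, namely $\varepsilon^2\zeta^{\varepsilon,\rho}\le f(\rho)$ a.e. Thus $\zeta^{\varepsilon,\rho}\ge0$ is supported in $D$, $\int_\Pi\zeta^{\varepsilon,\rho}\,dx\le\kappa$, and $\|\zeta^{\varepsilon,\rho}\|_{L^\infty}\le M:=f(\rho)/\varepsilon^2$. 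Note $f(\rho)\ge f(1)>0$ by (H1), so the constants below can be kept independent of $\rho$.

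Next I would prove the pointwise bound
\[
G(x,y)\le\frac1{2\pi}\Bigl(\log\frac1{|x-y|}\Bigr)_+ +C,\qquad x\in\Pi,\ y\in D,
\]
for a constant $C=C(r)$. Indeed, reflection across the $x_2$–axis is an isometry, so $|\bar x-y|=|x-\bar y|\le|x-y|+|y-\bar y|=|x-y|+2y_1\le|x-y|+4r$; hence $\tfrac{|\bar x-y|}{|x-y|}\le 1+\tfrac{4r}{|x-y|}$, which is $\le1+4r$ when $|x-y|\ge1$ and $\le\tfrac{1+4r}{|x-y|}$ when $|x-y|<1$. Since $\zeta^{\varepsilon,\rho}$ is supported in $D$ and the $(\,\cdot\,)_+$ part is nonnegative, this gives
\[
\mathcal{G}\zeta^{\varepsilon,\rho}(x)=\int_D G(x,y)\zeta^{\varepsilon,\rho}(y)\,dy\le\frac1{2\pi}\int_{\mathbb{R}^2}\Bigl(\log\frac1{|x-y|}\Bigr)_+\zeta^{\varepsilon,\rho}(y)\,dy+C\kappa .
\]

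The substantive step is to estimate the remaining logarithmic potential. Fix $x\in\Pi$ and set $s=\varepsilon\sqrt{\kappa/(\pi f(\rho))}$, so that $\pi M s^2=\kappa$; for $\varepsilon$ small (the bound $\varepsilon<\sqrt{\pi f(1)/\kappa}$ suffices, uniformly in $\rho$) one has $s<1$. Since $\int_{B_R(x)}\zeta^{\varepsilon,\rho}\,dy\le\min\{\kappa,\pi MR^2\}$ for every $R>0$, the layer–cake formula yields
\[
\int_{\mathbb{R}^2}\Bigl(\log\frac1{|x-y|}\Bigr)_+\zeta^{\varepsilon,\rho}\,dy=\int_0^\infty\Bigl(\int_{B_{e^{-t}}(x)}\zeta^{\varepsilon,\rho}\,dy\Bigr)dt\le\int_0^\infty\min\{\kappa,\pi Me^{-2t}\}\,dt=\kappa\log\frac1s+\frac\kappa2 .
\]
Since $\log\frac1s=\log\frac1\varepsilon+\frac12\log f(\rho)+\frac12\log\frac\pi\kappa$, combining the last two displays and absorbing all $\varepsilon$– and $\rho$–independent terms into $C$ gives precisely $\mathcal{G}\zeta^{\varepsilon,\rho}(x)\le\frac{\kappa}{2\pi}\log\frac1\varepsilon+\frac{\kappa}{4\pi}\log f(\rho)+C$. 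I do not expect a real obstacle here; the only points requiring attention are that the threshold on $\varepsilon$ and the constant $C$ be genuinely independent of $\rho$ (which is why $f(\rho)\ge f(1)>0$ matters) and that the coefficient of $\log f(\rho)$ come out as $\tfrac{\kappa}{4\pi}$ rather than $\tfrac{\kappa}{2\pi}$ — it does, because the factor $\tfrac12$ produced by the rearrangement is halved once more by the $\tfrac1{2\pi}$ in the Biot–Savart kernel.
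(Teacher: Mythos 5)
Your proof is correct and follows essentially the same route as the paper: bound the Green's function by the logarithmic kernel plus a constant, then concentrate the mass $\kappa$ at the maximal density $f(\rho)/\varepsilon^2$ into a ball of radius $\varepsilon\sqrt{\kappa/(\pi f(\rho))}$ (the paper invokes ``a simple rearrangement inequality'' where you use the layer--cake/bathtub computation, which is the same estimate). If anything your version is slightly cleaner, since your kernel bound using only $y\in D$ makes the estimate hold uniformly for all $x\in\Pi$ in one stroke, whereas the paper first treats $x\in D$ and then disposes of $x\in\Pi\setminus D$ separately.
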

\begin{proof}
  Since $\zeta^{\varepsilon,\rho}=\varepsilon^{-2}f_\rho(\psi^{\varepsilon,\rho})$, it follows that $0\le \zeta^{\varepsilon,\rho}\le f(\rho)/\varepsilon^2$ almost everywhere in $\Pi$. For $x\in D$, we have
  \begin{equation*}
  \begin{split}
     \mathcal{G}\zeta^{\varepsilon,\rho}(x) & = \frac{1}{2\pi} \int_D \log\frac{|\bar{x}-y|}{|x-y|}\zeta^{\varepsilon,\rho}(y)dy \\
       & \le \frac{1}{2\pi} \int_D \log\frac{1}{|x-y|}\zeta^{\varepsilon,\rho}(y)dy +C      \\
       & \le  \frac{1}{2\pi} \frac{f(\rho)}{\varepsilon^2}\int_{B_{\varepsilon\sqrt{\kappa/f(\rho)\pi}}(x)} \log\frac{1}{|x-y|}\zeta^{\varepsilon,\rho}(y)dy +C      \\
       & \le \frac{\kappa}{2\pi}\log\frac{1}{\varepsilon}+\frac{\kappa}{4\pi}\log f(\rho)+C,
  \end{split}
  \end{equation*}
  where the number $C>0$ does not depend on $\varepsilon$ and $\rho$ and we have used a simple rearrangement inequality. On the other hand, if $\varepsilon$ is sufficiently small, then $\psi^{\varepsilon,\rho}(x)\le 0$ on $\Pi \backslash D$. Hence the assertion is proved.
\end{proof}

By selecting a suitable competitor, we can easily get a lower bound of the energy.
\begin{lemma}\label{le13}
   There exists a constant $C>0$ not depending on $\varepsilon$ and $\rho$, such that
\begin{equation*}
  \mathcal{E}_{\varepsilon,\rho}(\zeta^{\varepsilon,\rho})\ge \frac{\kappa}{4\pi}\log{\frac{1}{\varepsilon}}-C.
\end{equation*}
\end{lemma}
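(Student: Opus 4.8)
The plan is to argue exactly as in the proof of Lemma~\ref{le3}, by exhibiting an explicit competitor $\tilde\zeta^\varepsilon\in\mathcal A$ and invoking the maximality of $\zeta^{\varepsilon,\rho}$. The one adjustment forced by the unboundedness of $f$ is that one can no longer use a blob of height $\tfrac12\sup f$; instead I would use the fixed admissible height $f(1)$, which is positive by (H1). Fix $x_0=r\mathbf{e}_1\in D$, put $s:=\sqrt{\kappa/(\pi f(1))}$, and set
\[
\tilde\zeta^\varepsilon:=\frac{f(1)}{\varepsilon^2}\,\chi_{B_{s\varepsilon}(x_0)}.
\]
For all small $\varepsilon$ one has $B_{s\varepsilon}(x_0)\subset D$, $\int_\Pi\tilde\zeta^\varepsilon\,dx=\pi s^2f(1)=\kappa$, and $0\le\varepsilon^2\tilde\zeta^\varepsilon\le f(1)\le f(\rho)$, so $\varepsilon^2\tilde\zeta^\varepsilon$ lies in the effective domain of $J_\rho$; hence $\tilde\zeta^\varepsilon\in\mathcal A$ and $\mathcal E_{\varepsilon,\rho}(\zeta^{\varepsilon,\rho})\ge\mathcal E_{\varepsilon,\rho}(\tilde\zeta^\varepsilon)$.

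Next I would estimate the three contributions to $\mathcal E_{\varepsilon,\rho}(\tilde\zeta^\varepsilon)$ separately, verifying that every error term is independent of $\rho$. Splitting $G(x,y)=\tfrac1{2\pi}\log\tfrac1{|x-y|}+\tfrac1{2\pi}\log|\bar x-y|$, the regular part is bounded on a neighbourhood of $x_0$, so after rescaling $B_{s\varepsilon}(x_0)$ to a fixed ball,
\[
\tfrac12\int_D\tilde\zeta^\varepsilon\,\mathcal G\tilde\zeta^\varepsilon\,dx=\frac{\kappa^2}{4\pi}\log\frac1\varepsilon+O(1),
\]
with the $O(1)$ depending only on $\kappa,f(1),D,x_0$. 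The drift term $\tfrac W2\int_Dx_1\tilde\zeta^\varepsilon\,dx$ is $O(1)$ since $\operatorname{supp}\tilde\zeta^\varepsilon\subset D\subset\{x_1<2r\}$ and $\int\tilde\zeta^\varepsilon=\kappa$. Finally
\[
\frac1{\varepsilon^2}\int_DJ_\rho(\varepsilon^2\tilde\zeta^\varepsilon)\,dx=\pi s^2\,J_\rho\!\big(f(1)\big),
\]
and $0\le J_\rho(f(1))\le f(1)$ uniformly in $\rho>1$: nonnegativity follows from $J_\rho(t)\ge t\cdot0-F_\rho(0)=0$, while the map $t\mapsto f(1)t-F_\rho(t)$ is concave, nondecreasing on $[0,1]$ and nonincreasing on $[1,\infty)$ (here monotonicity of $f$ and $\rho>1$ are used), so its supremum equals $f(1)-F(1)\le f(1)$. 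Adding the three estimates yields $\mathcal E_{\varepsilon,\rho}(\tilde\zeta^\varepsilon)\ge\tfrac{\kappa^2}{4\pi}\log\tfrac1\varepsilon-C$ with $C$ independent of $\varepsilon$ and $\rho$, which is the claimed lower bound.

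The computation itself is routine; the only genuine point is the uniformity of $C$ in the truncation parameter $\rho$. Since $\rho$ enters $\mathcal E_{\varepsilon,\rho}(\tilde\zeta^\varepsilon)$ only through the term $\varepsilon^{-2}\int_DJ_\rho(\varepsilon^2\tilde\zeta^\varepsilon)\,dx$, the whole difficulty reduces to the $\rho$-free bound $J_\rho(f(1))\le f(1)$, which is exactly where hypothesis (H1) (so that $f(1)>0$ and $f\ge0$) and the monotonicity of $f$ are invoked. Everything else is verbatim the argument of Lemma~\ref{le3}.
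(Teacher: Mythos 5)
Your argument is correct and is exactly the competitor argument the paper intends (its proof of Lemma~\ref{le13} is the one-line remark that a suitable competitor yields the bound), and you rightly isolate the only delicate point, namely the $\rho$-uniform estimate $0\le J_\rho(f(1))\le f(1)$ for the penalization term. Note that you in fact obtain the lower bound with the constant $\frac{\kappa^2}{4\pi}$, which is the correct one (consistent with Lemma~\ref{le3} and with what is needed in Lemma~\ref{le14}); the $\frac{\kappa}{4\pi}$ appearing in the statement seems to be a typographical slip.
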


The following result is an improvement of Lemma \ref{le6}.
\begin{lemma}\label{le14}
  For all sufficiently small $\varepsilon$, there holds
  \begin{equation*}
    \mu^{\varepsilon,\rho}\ge \frac{\kappa}{2\pi}\log\frac{1}{\varepsilon}-|1-2\vartheta_0|\rho-C,
  \end{equation*}
  where the constant $C>0$ does not depend on $\varepsilon$ and $\rho$.
\end{lemma}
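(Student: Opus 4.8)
The plan is to redo the energy identity from the proof of Lemma~\ref{le6} for the truncated functional, this time keeping the truncation error, and then to show that this error is $O(1)$ uniformly in the cut--off parameter $\rho$. Write $\zeta:=\zeta^{\varepsilon,\rho}$, $\psi:=\psi^{\varepsilon,\rho}$, $\mu:=\mu^{\varepsilon,\rho}$, and put $\mathcal I:=\int_D\zeta\psi\,dx$, $\mathcal J:=\varepsilon^{-2}\int_D J_\rho(\varepsilon^2\zeta)\,dx$. Arguing exactly as in Theorem~\ref{thm1} one has $\mu\ge0$ and $\int_\Pi\zeta\,dx\le\kappa$, and since $\psi$ solves the truncated equation~\eqref{3-1}, $\zeta=\varepsilon^{-2}f_\rho(\psi)$ a.e.\ in $\Pi$. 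Because $\varepsilon^{2}\zeta\in\partial F_\rho(\psi)$ a.e., Young's equality gives $J_\rho(\varepsilon^{2}\zeta)+F_\rho(\psi)=\varepsilon^{2}\zeta\psi$, hence $\mathcal J=\mathcal I-\varepsilon^{-2}\int_D F_\rho(\psi)\,dx$. Substituting $\mathcal G\zeta=\psi+Wx_1+\mu$ into $2\mathcal E_{\varepsilon,\rho}(\zeta)$ as in Lemma~\ref{le6} and discarding the nonnegative term $W\int_D x_1\zeta\,dx$ yields
\begin{equation*}
  \kappa\mu\ \ge\ 2\mathcal E_{\varepsilon,\rho}(\zeta)+\varepsilon^{-2}\!\int_D\bigl(f_\rho(\psi)\psi-2F_\rho(\psi)\bigr)\,dx .
\end{equation*}

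The second step is a pointwise lower bound for $g_\rho(s):=f_\rho(s)s-2F_\rho(s)$ coming from (H2). For $0\le s\le\rho$, (H2) gives $g_\rho(s)=f(s)s-2F(s)\ge(1-2\vartheta_0)f(s)s-2\vartheta_1 f(s)$; for $s>\rho$, writing $F_\rho(s)=F(\rho)+f(\rho)(s-\rho)$ and using (H2) at $\rho$ gives $g_\rho(s)\ge(1-2\vartheta_0)f(\rho)\rho-f(\rho)(s-\rho)-2\vartheta_1 f(\rho)$. Both cases are contained in
\begin{equation*}
  g_\rho(s)\ \ge\ (1-2\vartheta_0)f_\rho(s)\min\{s,\rho\}-f_\rho(s)(s-\rho)_+-2\vartheta_1 f_\rho(s),\qquad s\ge0 .
\end{equation*}
Multiplying by $\varepsilon^{-2}$, integrating over $D$, and using $\varepsilon^{-2}f_\rho(\psi)=\zeta$ gives
\begin{equation*}
  \varepsilon^{-2}\!\int_D g_\rho(\psi)\,dx\ \ge\ (1-2\vartheta_0)\!\int_D\zeta\min\{\psi,\rho\}\,dx-\int_D\zeta(\psi-\rho)_+\,dx-2\vartheta_1\kappa .
\end{equation*}
Since $0\le\int_D\zeta\min\{\psi,\rho\}\,dx\le\rho\int_D\zeta\,dx\le\rho\kappa$, the first term on the right is at least $-|1-2\vartheta_0|\kappa\rho$, regardless of whether $\vartheta_0\le\tfrac12$ or $\vartheta_0>\tfrac12$.

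The crux — and the only genuinely new ingredient — is to bound $\int_D\zeta(\psi-\rho)_+\,dx$ by a constant \emph{independent of $\varepsilon$ and $\rho$}. The crude estimate $\zeta\le\varepsilon^{-2}f(\rho)$ (as used in Lemma~\ref{le4}) would only produce a $\rho$--dependent bound, which is useless here. Set $w:=(\psi-\rho)_+\in H^1_0(\Pi)$ and $\Omega:=\{\psi>\rho\}$; since $\psi>\rho$ on $\Omega$ one has $\zeta=\varepsilon^{-2}f_\rho(\psi)=\varepsilon^{-2}f(\rho)$ there. Testing $-\Delta\psi=\zeta$ against $w$ gives $\int_\Pi|\nabla w|^2\,dx=\int_\Pi\zeta w\,dx=\varepsilon^{-2}f(\rho)\int_\Pi w\,dx$, while $\varepsilon^{-2}f(\rho)|\Omega|=\int_\Omega\zeta\,dx\le\kappa$. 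By Cauchy–Schwarz and the Faber–Krahn inequality (Poincaré on the set $\Omega$ of small measure), $\int_\Pi w\,dx\le|\Omega|^{1/2}\|w\|_{L^2}\le C|\Omega|\,\|\nabla w\|_{L^2}$; feeding this back, $\|\nabla w\|_{L^2}\le C\varepsilon^{-2}f(\rho)|\Omega|$, hence
\begin{equation*}
  \int_D\zeta(\psi-\rho)_+\,dx=\varepsilon^{-2}f(\rho)\!\int_\Pi w\,dx\ \le\ C\bigl(\varepsilon^{-2}f(\rho)|\Omega|\bigr)^2\ \le\ C\kappa^2 .
\end{equation*}
(Equivalently: $w$ equals $\varepsilon^{-2}f(\rho)$ times the torsion function of $\Omega$, so Talenti's comparison with a disk of the same area gives $\|w\|_{L^\infty}\le\varepsilon^{-2}f(\rho)|\Omega|/(4\pi)\le\kappa/(4\pi)$, whence $\int_D\zeta(\psi-\rho)_+\,dx\le\|w\|_{L^\infty}\int_D\zeta\,dx\le\kappa^2/(4\pi)$.) Combining the three estimates, $\varepsilon^{-2}\int_D g_\rho(\psi)\,dx\ge-|1-2\vartheta_0|\kappa\rho-C$, so the displayed inequality for $\kappa\mu$ together with the energy lower bound of Lemma~\ref{le13} gives $\kappa\mu^{\varepsilon,\rho}\ge\tfrac{\kappa^2}{2\pi}\log\tfrac1\varepsilon-|1-2\vartheta_0|\kappa\rho-C$; dividing by $\kappa$ completes the proof. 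The decisive difficulty is exactly this uniform control of $\int_D\zeta(\psi-\rho)_+\,dx$: it works only because on the overshoot region $\{\psi>\rho\}$ the forcing strength $\varepsilon^{-2}f(\rho)$ and the measure of the region are reciprocally constrained, so their product — which governs the torsion function — stays $O(1)$ however large $\rho$ is taken.
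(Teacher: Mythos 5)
Your proof is correct and follows essentially the same route as the paper's: the duality identity $J_\rho(\varepsilon^2\zeta)+F_\rho(\psi)=\varepsilon^2\zeta\psi$ feeding into the expansion of $2\mathcal{E}_{\varepsilon,\rho}$, the (H2)-based pointwise lower bound on $f_\rho(\psi)\psi-2F_\rho(\psi)$ producing the $-|1-2\vartheta_0|\kappa\rho$ term, and the identification of $f(\rho)\varepsilon^{-2}\int_D(\psi^{\varepsilon,\rho}-\rho)_+\,dx$ as the quantity needing a bound uniform in both $\varepsilon$ and $\rho$. The only divergence is in that last estimate, where the paper tests with $(\psi^{\varepsilon,\rho}-\rho)_+$ and closes via H\"older, the embedding $W^{1,1}_0\hookrightarrow L^2$, and an absorption argument valid for small $\varepsilon$, whereas you exploit the constraint $\varepsilon^{-2}f(\rho)\,|\{\psi^{\varepsilon,\rho}>\rho\}|\le\kappa$ together with Faber--Krahn (or Talenti's comparison) to get the clean bound $\le C\kappa^2$ directly --- a minor, if slightly tidier, variant of the same step.
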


\begin{proof}
  By assumption (H2), there exist $\vartheta_0\in(0,1)$ and $\vartheta_1>0$ such that
  \begin{equation}\label{3-3}
    [F(s)\leq \vartheta_0f(s)s+\vartheta_1f(s),\,\,\,\forall\,s\geq0.
  \end{equation}
Observe that
\begin{equation}\label{3-4}
  F_\rho(s)=F(s)\ \ \text{if}\ \ s\le \rho\,;\ \ F_\rho(s)=F(\rho)+f(\rho)(s-\rho)\ \ \text{if}\ \ s>\rho.
\end{equation}
By convexity, it holds
\begin{equation}\label{3-5}
   J_\rho(\varepsilon^2\zeta^{\varepsilon,\rho})+ F_\rho(\psi^{\varepsilon,\rho}) = \varepsilon^2 \zeta^{\varepsilon,\rho} \psi^{\varepsilon,\rho}, \ \ \text{a.e.}\ \text{on}\ D,
\end{equation}
Using \eqref{3-3}, \eqref{3-4} and \eqref{3-5}, we get
    \begin{equation*}
  \begin{split}
     2\mathcal{E}_{\varepsilon,\rho}(\zeta^{\varepsilon,\rho}) & =\int_D \zeta^{\varepsilon,\rho}\mathcal{G}\zeta^{\varepsilon,\rho} dx-2W\int_D x_1 \zeta^{\varepsilon,\rho} dx-\frac{2}{\varepsilon^2}\int_D J_\rho(\varepsilon^2\zeta^{\varepsilon,\rho})dx\\
       & \le -\int_D \zeta^{\varepsilon,\rho} \psi^{\varepsilon,\rho}dx+\frac{2}{\varepsilon^2}\int_D F_\rho(\zeta^{\varepsilon,\rho})+\kappa+3r\kappa W \mu^{\varepsilon,\rho}  \\
       & \le \frac{f(\rho)}{\varepsilon^2}\int_D (\psi^{\varepsilon,\rho}-\rho)_+dx +|1-2\vartheta_1|\rho+\kappa \mu^{\varepsilon,\rho}+3r\kappa W +2\vartheta_1\kappa,
  \end{split}
  \end{equation*}
  By Lemma \ref{le13}, we conclude that
  \begin{equation*}
     \mu^{\varepsilon,\rho}\ge \frac{\kappa}{2\pi}\log\frac{1}{\varepsilon}-|1-2\vartheta_0|\rho-\frac{f(\rho)}{\varepsilon^2}\int_D (\psi^{\varepsilon,\rho}-\rho)_+dx-(3rW+2\vartheta_1).
  \end{equation*}
  So it remains to show that $f(\rho)\varepsilon^{-2}\int_D (\psi^{\varepsilon,\rho}-\rho)_+dx$ is uniformly bounded with respect to $\varepsilon$ and $\rho$. Taking $U:=(\psi^{\varepsilon,\rho}-\rho)_+$ as a test function, by H\"older's inequality and Sobolev's inequality we have
  \begin{equation}\label{3-6}
  \begin{split}
     \int_D |\nabla U|^2 dx & =\int_D \zeta^{\varepsilon,\rho} U dx \\
       & \le f(\rho)\varepsilon^{-2} |\text{supp}(U)|^\frac{1}{2}\left(\int_D U^2 dx\right)^\frac{1}{2}  \\
       & \le Cf(\rho)\varepsilon^{-2} |\text{supp}(U)|^\frac{1}{2}\left(\int_D |\nabla U| dx+\int_D U dx  \right)\\
       & \le C \left(\int_D |\nabla U|^2 dx \right)^\frac{1}{2}+C\varepsilon \int_D \zeta^{\varepsilon,\rho} U dx,
  \end{split}
  \end{equation}
  where $C>0$ is independent of $\varepsilon$ and $\rho$. From \eqref{3-6}, we conclude that $f(\rho)\varepsilon^{-2}\int_D (\psi^{\varepsilon,\rho}-\rho)_+dx$ is uniformly bounded with respect to $\varepsilon$ and $\rho$. The proof is thus completed.
 \end{proof}

Combining Lemma \ref{le12} and Lemma \ref{le14}, we get a priori upper bound of $\psi^{\varepsilon, \rho}$ with respect to $\rho$.
\begin{lemma}
There exists a constant $C_0>0$ not depending on $\varepsilon$ and $\rho$, such that
\begin{equation*}
  \psi^{\varepsilon,\rho}(x)\le \frac{\kappa}{4\pi}\log f(\rho)+|1-2\vartheta_0|\rho+C_0,\ \ \forall\, x\in \Pi,
\end{equation*}
when $\varepsilon$ is sufficiently small.
\end{lemma}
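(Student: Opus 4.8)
The plan is to combine the two immediately preceding lemmas algebraically. Recall from the definition $\psi^{\varepsilon,\rho}(x)=\mathcal{G}\zeta^{\varepsilon,\rho}(x)-Wx_1-\mu^{\varepsilon,\rho}$, so for every $x\in\Pi$ we have the pointwise bound
\[
  \psi^{\varepsilon,\rho}(x)\le \mathcal{G}\zeta^{\varepsilon,\rho}(x)-\mu^{\varepsilon,\rho},
\]
since $Wx_1\ge0$ on $\Pi$. Then I would substitute the upper bound for $\mathcal{G}\zeta^{\varepsilon,\rho}$ from Lemma \ref{le12},
\[
  \mathcal{G}\zeta^{\varepsilon,\rho}(x)\le \frac{\kappa}{2\pi}\log\frac{1}{\varepsilon}+\frac{\kappa}{4\pi}\log f(\rho)+C_1,
\]
and the lower bound for $\mu^{\varepsilon,\rho}$ from Lemma \ref{le14},
\[
  \mu^{\varepsilon,\rho}\ge \frac{\kappa}{2\pi}\log\frac{1}{\varepsilon}-|1-2\vartheta_0|\rho-C_2,
\]
both valid for $\varepsilon$ sufficiently small with $C_1,C_2$ independent of $\varepsilon$ and $\rho$. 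Subtracting, the two divergent terms $\frac{\kappa}{2\pi}\log\frac1\varepsilon$ cancel exactly, leaving
\[
  \psi^{\varepsilon,\rho}(x)\le \frac{\kappa}{4\pi}\log f(\rho)+|1-2\vartheta_0|\rho+(C_1+C_2),
\]
so one sets $C_0=C_1+C_2$, which depends on neither $\varepsilon$ nor $\rho$. This establishes the claim.

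The only subtlety is bookkeeping of quantifiers: both input lemmas hold "for $\varepsilon$ sufficiently small", and the threshold in Lemma \ref{le12} may a priori depend on $\rho$. I would note that since the conclusion is only asserted for $\varepsilon$ small and we may take the smaller of the two thresholds, and since the constants $C_1, C_2$ are uniform in $\rho$, the resulting constant $C_0$ is genuinely uniform. There is no real obstacle here — this lemma is a purely arithmetic consequence of the preceding two estimates, and its role is to set up the subsequent argument (choosing $\rho$ to satisfy $\frac{\kappa}{4\pi}\log f(\rho)+|1-2\vartheta_0|\rho+C_0\le\rho$, which is possible by assumption (H3) since $\vartheta_2\rho=\frac{4\pi\min\{2\vartheta_0,2-2\vartheta_0\}}{\kappa}\rho$ dominates $\log f(\rho)$ along a subsequence, forcing $\psi^{\varepsilon,\rho}\le\rho$ and hence removing the truncation). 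The main work has already been done in Lemmas \ref{le12} and \ref{le14}; the present statement is essentially their corollary.
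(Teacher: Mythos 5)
Your proof is correct and is exactly the argument the paper intends: the paper states this lemma as an immediate consequence of "combining Lemma \ref{le12} and Lemma \ref{le14}", which is precisely your cancellation of the two $\frac{\kappa}{2\pi}\log\frac{1}{\varepsilon}$ terms after dropping $-Wx_1\le 0$. Your remark on tracking the smallness thresholds and the uniformity of the constants is a welcome (if minor) clarification of a point the paper leaves implicit.
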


Now we are ready to prove Theorem \ref{thm2}.  By assumption (H3), we can find a number $\rho_0>0$ such that
\begin{equation*}
  \frac{\kappa}{4\pi}\log f(\rho_0)+|1-2\vartheta_0|\rho_0+C_0 \le \rho_0.
\end{equation*}
This means that $\psi^{\varepsilon,\rho_0}\le \rho_0$ almost everywhere in $\Pi$ when $\varepsilon$ is small enough. Hence
\begin{equation*}
   -\Delta \psi^{\varepsilon,\rho_0}= \frac{1}{\varepsilon^2} f(\psi^{\varepsilon,\rho_0})\ \ \ \ \text{in}\ \ \Pi.
\end{equation*}
The proof of Theorem \ref{thm2} is thus completed.

{\bf Acknowledgments.}
{ This work was supported by NNSF of China Grant 11831009 and Chinese Academy of Sciences (No.
QYZDJ-SSW-SYS021).
}

\phantom{s}
 \thispagestyle{empty}


\begin{thebibliography}{99}



\bibitem{Abe}
K. Abe and K. Choi, Stability of Lamb dipoles, Preprint	arXiv:1911.01795.

\bibitem{Am}
A. Ambrosetti and P. Rabinowitz, Dual variational methods in critical point theory and applications,
 \textit{J. Funct. Anal.}, 14 (1973), 349-381.


\bibitem{Amb}
A. Ambrosetti and J. Yang, Asymptotic behaviour in planar vortex theory, \textit{Atti Accad. Naz. Lincei Cl. Sci. Fis. Mat. Natur. Rend. Lincei (9) Mat. Appl.}, 1(4)(1990), 285-291.


\bibitem{Bad}
 T. V. Badiani, Existence of steady symmetric vortex pairs on a planar domain with an obstacle, \textit{Math. Proc. Cambridge Philos. Soc.}, 123(1998), 365-384.

\bibitem{Ber}
H. Berestycki and H. Br$\acute{\text{e}}$zis, On a free boundary problem arising in plasma physics, \textit{Nonlinear Anal.}, 4(1980), 415-436.


\bibitem{Bu0}
 G. R. Burton, Steady symmetric vortex pairs and rearrangements, \textit{Proc. R. Soc. Edinb., Sect. A}, 108(1988), 269-290.

\bibitem{Bu1}
G. R. Burton, Uniqueness for the circular vortex-pair in a uniform flow, \textit{Proc. Roy. Soc. London Ser. A}, 452 (1996), no. 1953, 2343-2350.

\bibitem{Bu2}
G. R. Burton, Isoperimetric properties of Lamb’s circular vortex-pair, \textit{J. Math. Fluid
Mech.}, 7(2005), S68–S80.

\bibitem{Bu3}
G. R. Burton, Global nonlinear stability for steady ideal fluid flow in bounded planar domains,
\textit{Arch. Ration. Mech. Anal.}, 176(2005), 149-163.

\bibitem{Bu5}
G. R. Burton, Compactness and stability for planar vortex-pairs with prescribed impulse, \textit{J. Differential Equations}, 270(2021), 547-572.

\bibitem{Bu4}
G. R. Burton, H. J. Nussenzveig Lopes and M. C. Lopes Filho, Nonlinear stability for
steady vortex pairs, \textit{Commun. Math. Phys.}, 324(2013), 445-463.

\bibitem{Cao1}
D. Cao, Z. Liu and J. Wei, Regularization of point vortices  for the
Euler equation in dimension two, \textit{Arch. Ration. Mech. Anal.},
212, 179-217(2014).

\bibitem{Cao2}
D. Cao, G. Wang and W. Zhan, Desingularization of vortices for 2D steady Euler flows via the vorticity
method, \textit{SIAM J. Math. Anal.}, 52(6)(2020), 5363-5388.

\bibitem{Da1}
J. D$\acute{\text{a}}$vila, M. Del Pino, M. Musso and J. Wei, Gluing methods for vortex dynamics in Euler
flows,\textit{Arch. Ration. Mech. Anal.}, 235(3)(2020), 1467–1530.

\bibitem{Del}
T. DeLillo, A. Elcrat and E. Kropf, Steady vortex dipoles with general profile functions, \textit{J. Fluid Mech.}, 670(2011), 85-95.



\bibitem{ET}
A. Eydeland and B. Turkington, A computational method of solving free-boundary problems in vortex dynamics, \textit{J. Comput Phys.}, 78(1988), 194-214.

\bibitem{Flo}
J. Flor and G. J. F. Van Heijst, An experimental study of dipolar vortex structures in a stratified fluid, \textit{J. Fluid Mech.}, 279(1994), 101-133.


\bibitem{Fra}
L. E. Fraenkel and M. S. Berger, A global theory of steady vortex rings in an ideal fluid, \textit{Acta Math.},
132, 13-51(1974).



\bibitem{Hei}
G. J. F. V. Heijst and J. B. Flor, Dipole formation and collisions in a stratified fluid, \textit{Nature}, 340(1989), 212-215.

\bibitem{Kiz}
Z. Kizner and R. Khvoles, Two variations on the theme of Lamb-Chaplygin: supersmooth dipole and rotating multipoles, \textit{Regul. Chaotic Dyn.}, 9(4)(2004), 509-518.

\bibitem{Lamb}
H. Lamb, Hydrodynamics, Cambridge University Press, Cambridge, 3rd ed., 1906.



\bibitem{MB}
A. Majda and A. Bertozzi, Vorticity and Incompressible Flow, Cambridge University Press, Cambridge, 2002.

\bibitem{Mar}
C. Marchioro and M. Pulvirenti, Mathematical Theory of Incompressible Noviscous Fluids, Springer-Verlag, New York, 1994.

\bibitem{Mel}
V. V. Meleshko and G. J. F. van Heijst, On Chaplygin's investigations of two-dimensional vortex structures in an inviscid fluid, \textit{J. Fluid Mech.}, 272(1994), 157-182.

\bibitem{Ni}
W. M. Ni,  On the existence of global vortex rings, \textit{J. Anal. Math.}, 37(1980), 208-247.



\bibitem{Nor}
J. Norbury, Steady planar vortex pairs in an ideal fluid, \textit{Comm. Pure Appl. Math.}, 28(1975), 679-700.



\bibitem{Poc}
H. C. Pocklington, The configuration of a pair of equal and opposite hollow and straight vortices of finite cross-section, moving steadily through fluid, \textit{Proc. Camb. Phil. Soc.}, 8(1895), 178-187.

\bibitem{Roc}
T. Rockafellar, Convex Analysis, Princeton Univ. Princeton, NJ, Press, 1972.

\bibitem{SV}
D. Smets and J. Van Schaftingen, Desingularization of vortices for the Euler equation, \textit{ Arch. Ration. Mech. Anal.}, 198(3)(2010), 869-925.

\bibitem{St}
C. A. Stuart and J. F. Toland, A variational method for boundary value problems with discontinuous nonlinearities, \textit{J. London Math. Soc.}, 21(1980), 329-335.

\bibitem{Tol}
J. F. Toland, A duality principle for non-convex optimisation and the calculus of variations, \textit{Arch. Rational Mech. Anal.}, 71(1979), 41-61.

\bibitem{Tur83}
B. Turkington, On steady vortex flow in two dimensions. I, II,
 \textit{Comm. Partial
Differential Equations}, 8(1983), 999-1030, 1031-1071.

\bibitem{Yang}
J. Yang, Existence and asymptotic behavior in planar vortex theory, \textit{Math. Models Methods Appl. Sci.}, 1(1991), 461-475.



\end{thebibliography}
\end{document}